\newcommand{\no}[1]{\left\| #1 \right\|}
\numberwithin{equation}{section}
\newtheorem{theorem}[equation]{Theorem}
\newtheorem{lemma}[equation]{Lemma}
\newtheorem{corollary}[equation]{Corollary}
\newtheorem{proposition}[equation]{Proposition}
\theoremstyle{definition}
\newtheorem{definition}[equation]{Definition}
\newtheorem{remark}[equation]{Remark}
\newcommand{\cal}{\mathcal}
\newcommand{\un}{\underline}
\newcommand{\N}{\mathbb N}
\newcommand{\NN}{\mathbf N}
\newcommand{\C}{\mathbb C}
\newcommand{\lin}{{\rm lin}}
\newcommand{\<}{\langle}
\renewcommand{\>}{\rangle}
\newcommand{\CHI}[1]{\ensuremath{ \chi\raisebox{-1ex}{$\scriptstyle #1$} }}
\begin{document}
\title
{Modules, completely positive maps, and a generalized KSGNS
construction} \maketitle

\begin{center}

Juha-Pekka Pellonp\"a\"a \\
Turku Centre for Quantum Physics \\
Department of Physics and Astronomy\\ University of Turku\\ FI-20014 Turku, Finland\\  \texttt{juhpello@utu.fi}\\ \mbox{}\\

Kari Ylinen\\
Department of Mathematics\\ University of Turku\\ FI-20014 Turku,
Finland\\ \texttt{ylinen@utu.fi}

\end{center}

\begin{abstract}
A very general KSGNS type dilation theorem in the context
of right (not necessarily Hilbert) modules over $C^*$-algebras is presented.
The proof uses Kolmogorov type decompositions for positive-definite
kernels with values in spaces of sesquilinear maps.
More specific functional analytic applications 
are obtained by adding assumptions. 
\end{abstract}

\subjclass{46L08 (Primary); 47A07 (Secondary) 

\keywords{Hilbert $C^*$-module, right module over a $C^*$-algebra,
Kolmogorov decomposition, KSGNS construction, sesquilinear map
valued measure}


\section{Introduction}

The celebrated dilation theorem of Stinespring \cite{Stinespring}
subsumed Naimark's dilation theorem for semispectral measures and
the fundamental Gelfand-Naimark-Segal construction of
representations of $C^*$-algebras based on positive linear forms.
Stinespring's work has had no shortage of applications or
extensions. One notable generalization is known under the name of
the KSGNS construction, the ''K'' referring to Kasparov, see e.g.
\cite[Chapter 5]{La95}. A very recent paper in this general
direction is \cite{Asadi09}, and we also mention specifically
\cite{Murphy} which is related to the line of approach taken in this
paper in a more general situation.

There are various even physically motivated reasons to relax the
framework of the dilation theorems. In \cite{HPY1} and \cite{HPY2}
this was done by considering sesquilinear form valued measures as
generalizations of the more traditional theory of operator measures
and their dilations. The physical background is related to the need
to describe measurement situations where only a restricted class of
state preparations are available.

The present paper grew out of an attempt to deal jointly with some
challenges present in the lines of development referred to in both
of the preceding paragraphs. We consider both Hilbert $C^*$-modules
and substantially more general structures. Some highlights of our
results are the following.

Section 3 contains  Kolmogorov type decompositions for
positive-definite kernels with values in spaces of sesquilinear
maps. In the algebraic version, Proposition 3.1, the relevant maps
are $\C$-sesquilinear on $V\times V$ for a vector space $V$ and take
values in a ${}^*$-algebra. More specific information -- the
existence and uniqueness of a minimal Kolmogorov decomposition -- is
obtained in Theorem 3.3 dealing with a (not necessarily Hilbert) 
module $V$ over
a $C^*$-algebra $A$ and $A$-valued $A$-sesquilinear maps on $V\times
V$.

The Kolmogorov decompositions of Section 3 provide a central
technique for the sequel. In particular, Theorem 4.3 and Corollary
4.8 are very general  KSGNS type results based on this approach. It
should be mentioned that the general outline of these arguments is
largely inspired Murphy's work in \cite{Murphy}. The case where the
coefficient $C^*$-algebra $A$ has no identity element causes extra
difficulties which we address by resorting to the second adjoint
$A^{**}$ (see Theorem 4.7).

At the end of the paper we return to direct generalizations of the
motivating case of an operator measure. In Section 5 we prove in the
context of modules and sesquilinear maps an extension of the fact
already shown by Stinespring that for a commutative domain
positivity implies complete positivity (Theorem 5.2), and a rather
concrete example concludes the paper.

\section{Basic definitions for modules and mappings}
In this section we summarize some of the basics of the theory of
($C^*$-)modules. We use \cite{La95}, \cite{G-BVF01}, and
\cite{Manuilov} as general sources where one can find the proofs of
results which  we use without explicit reference. 
The scalar field of all vector spaces is
$\C$. We denote the scalar multiplication of any vector space $V$ by
$cv$ where $c\in\C$, $v\in V$, and we let $I_V$ denote the identity
operator $v\mapsto v$ of a vector space $V$.

For any algebra $A$, the algebra product is denoted by $aa'$ where
$a,\,a'\in A$. If $A$ is a *-algebra, the involution of $A$ is
denoted by $a^*$ where $a\in A$. We write $a\ge 0$ and call $a$ {\it
positive} if for some $p\in\N:=\{1,2,3,\ldots\}$ exist elements
$a_m\in A$, $m=1,\dots,p$, such that $\sum_{m=1}^p a_m^*a_m=a$. If
$A$ is a $C^*$-algebra, we let $\|a\|_A$ denote the norm of $a$.


A set $V$ is a {\it (right) module over an algebra $A$} or, briefly,
an {\it $A$-module}
if it satisfies the  following axioms:
\begin{enumerate}
\item $V$ is a vector space.
\item There exists a mapping (module product)
$V\times A\ni(v,a)\mapsto v\cdot a\in V$ which satisfies the
following requirements for all $v,\,v'\in V$, $a,\,a'\in A$, and
$c\in\C$:
\begin{enumerate}
\item $(v+v')\cdot a=v\cdot a+v'\cdot a,$
\item $v\cdot(a+a')=v\cdot a+v\cdot a',$
\item $v\cdot(aa')=(v\cdot a)\cdot a',$
\item $c(v\cdot a)=(cv)\cdot a=v\cdot(ca)$,
\item if $A$ has an identity $e$ then $v\cdot e=v$.
\end{enumerate}
\end{enumerate}

(We do not consider left (or bi-) modules in this paper.) We say
that a vector subspace $W$ of an $A$-module $V$ is an {\it
$A$-submodule} of $V$ if $W\cdot A\subseteq W$. If $V$ is an
$A$-module and $W$ its $A$-submodule, one can define the {\it
quotient $A$-module} $V/W=V/\sim$ as the quotient vector space
(consisting of equivalence classes $[v]=v+W$ with respect to the
equivalence relation $v\sim v'$ if and only if $v-v'\in W$) equipped
with the (well-defined) module product $[v]\cdot a:=[v\cdot a]$.

Let $A$ be a *-algebra and $V$ an $A$-module. A {\it semi-inner
product} is a mapping $V\times V\ni(v,v')\mapsto\<v|v'\>\in A$ for
which the following conditions hold for all $v,\,v',\,v''\in V$,
$a\in A$, and $c\in\C$:
\begin{enumerate}
\item $\<v|v\>\ge 0,$
\item $\<v|v'+cv''\>=\<v|v'\>+c\<v|v''\>,$
\item $\<v|v'\cdot a\>=\<v|v'\>a,$
\item $\<v|v'\>=\<v'|v\>^*.$
\end{enumerate}
If, in addition, $\<v|v\>=0$ implies  $v=0$ (the definiteness
axiom), we say that $(v,v')\mapsto\<v|v'\>$ is an {\it inner
product}. An $A$-module $V$ equipped with a (semi-)inner product is
called as a {\em (semi-) inner product $A$-module}.

Let now $A$ be a $C^*$-algebra. For any semi-inner product
$A$-module $V$ we have the Cauchy-Schwarz  inequality
$$
\<v|v'\>\<v'|v\>\le\|\<v'|v'\>\|_A\<v|v\>,\hspace{1cm}v,\, v'\in V.
$$
If an inner-product $A$-module $V$ is complete with respect to the
norm $\|v\|:=\sqrt{\|\<v|v\>\|_A}$ we say that $V$ is  a {\em
Hilbert $C^*$-module (over $A$)} or a {\em Hilbert $A$-module} and
usually denote $V$ by $M$. Next we define the basic sets of mappings
which we are going to use later.

Let $A$ be an algebra, and let $V$ and $W$ be vector spaces. We let
${\rm Lin}_\C(V,W)$ denote the set of $\C$-linear mappings from $V$
to $W$. If $V$ and $W$ are $A$-modules, we let ${\rm Lin}_A(V,W)$
denote the set of {\em $A$-linear} mappings $f:V\to W$; by
definition,  $f\in {\rm Lin}_A(V,W)$ if $f(v\cdot a)=f(v)\cdot a$
for all $v\in V$ and $a\in A$, and moreover $f\in {\rm
Lin}_\C(V,W)$. 
If $A$ is unital, the latter requirement may be replaced by
additivity. If $M$ is a Hilbert $C^*$-module over a $C^*$-algebra
$A$, then we let $L_A(M)$ denote the set of adjointable maps from
$M$ into itself. It is known that $L_A(M)\subseteq{\rm Lin}_A(M,M)$
and every element of $L_A(M)$ is bounded.

We let  $\operatorname{Lin}^\times_{\C}(V,A)$ denote the
$\C$-linear space of $\C$-antilinear (i.e., conjugate linear)
mappings from a vector space $V$ to an algebra $A$, that is, $f\in
\operatorname{Lin}^{\times}_{\C}(V,A)$ if and only if
$f(v+v')=f(v)+f(v')$ and $f(cv)=\overline{c}f(v)$ for all $v,\,v'\in
V$ and $c\in\C$. For any $v\in V$ and $f\in
\operatorname{Lin}^{\times}_{\C}(V,A)$ we denote $\<v,f\>:=f(v)$.

Let $\operatorname{Lin}^\times_{A}(V,A)$ be the $\C$-linear space of
the {\it $A$-antilinear} mappings $f$ from an $A$-module $V$ to a
*-algebra $A$; by definition,
$f\in\operatorname{Lin}^\times_{\C}(V,A)$ and we have also $f(v\cdot
a)=a^*f(v)$ for all $a\in A$ and $v\in V$.  If there is no
possibility of confusion, we may denote
$\operatorname{Lin}^{\times}_{\C}(V,A)$ by $V^\times_\C$  and
$\operatorname{Lin}^{\times}_{A}(V,A)$ by $V^\times_A$ or simply by
$V^\times$.

Let $V$ be a module over a *-algebra $A$. Now $V^\times=V^\times_A$
becomes an $A$-module when we define the module product as
$V^\times\times A\ni(f,a)\mapsto fa\in V^\times$ where
$(fa)(v):=f(v)a$. Thus, we have an $A$-sesquilinear pairing
$$
V\times V^\times\ni(v,f)\mapsto \<v,f\>=f(v)\in A;
$$
by definition, it is $\C$-sesquilinear and satisfies $\<v\cdot
a,f\>=a^*\<v,f\>$ and $\<v,fa\>=\<v,f\>a$ for all $v\in V$, $f\in
V^\times$, and $a\in A$. If $V$ has an inner product
$\<\,\cdot\,|\,\cdot\,\>$, one can then $A$-linearly embed  $V$ in
$V^\times$ by $V\ni v\mapsto f_v\in V^\times$ where
$f_v(v'):=\<v'|v\>$ for all $v'\in V$. Note that $f_v=f_{v''}$
implies that $v=v''$ by the definiteness of the inner product, and
the above pairing is consistent with the embedding $v\mapsto f_v$:
$$
\<v',f_v\>=f_v(v')=\<v'|v\>,\hspace{1cm} \<v'\cdot a,f_v
a'\>=a^*\<v',f_v\>a'=a^*\<v'|v\>a'=\<v'\cdot a|v\cdot a'\>.
$$

Finally, for any vector space $V$, we let $S_\C(V;A)$ be the vector
space of $\C$-sesquilinear maps $V\times V\to A$; we always assume
$s\in S_\C(V;A)$ to be antilinear with respect to the first
argument. Usually, we write briefly $S_\C(V)$ instead of
$S_\C(V;A)$. If $V$ is a module over a *-algebra $A$, we let
$S_A(V)$ be the vector space of {\it $A$-sesquilinear} maps
$s:V\times V\to A$, that is, $s\in S_\C(V)$  and $s(v,v'\cdot
a)=s(v,v')a$ and $s(v\cdot a,v')=a^*s(v,v')$ for all $v,\,v'\in V$
and $a\in A$. Note that any $s\in S_A(V)$ can be considered as an
$A$-linear mapping $V\to V^\times_A$, $v'\mapsto\big[v\mapsto
s(v,v')\big]$ and we may identify $S_A(V)$ with ${\rm
Lin}_A(V,V^\times_A)$.

\section{A generalization of the Kolmogorov decomposition}
Let $A$ be a *-algebra, and let $n\in\N$. Let $M_n(A)$ be the matrix
algebra consisting of the $A$-valued $n\times n$-matrices
$(a_{ij})_{i,j=1}^n$. Then $M_n(A)$ is a *-algebra (even a
$C^*$-algebra if $A$ is one), and  its positive elements are by
definition finite sums of matrices of the form $M^*M$ with $M\in
M_n(A)$. Thus  an element $(a_{ij})_{i,j=1}^n$ of $M_n(A)$ is
positive if and only if there are elements $a_{mki}\in A$,
$m\in\{1,\dots,p\}$, $k,\,i\in\{1,\dots,n\}$, such that
$a_{ij}=\sum_{m=1}^p\sum_{k=1}^n a_{mki}^* a_{mkj}$. As in the proof
of Lemma 3.1 in \cite[p.\ 193]{Takesaki}  it follows that
$(a_{ij})_{i,j=1}^n\in M_n(A)$ is positive if and only if it is a
finite sum of matrices of the form $(a_i^*a_j)_{i,j=1}^n$ with
$a_1,\dots,a_n\in A$.
This implies that $\sum_{i,j=1}^n \un a_i^*a_{ij}\un a_j\ge 0$ for
all $\un a_1,\ldots,\un a_n\in A$. (If $A$ is a $C^*$-algebra, the
converse also holds, see Lemma 3.2 in \cite[p.\ 193]{Takesaki}.)
Note that, for any positive matrix $(a_{ij})_{i,j=1}^n$,
$a_{ij}^*=a_{ji}$ and $\sum_{i,j=1}^n a_{ij}\ge 0$.

Let $V$ be a vector space [resp.\ an $A$-module] and
$M_n\big(S_\C(V)\big)$ [resp.\ $M_n\big(S_A(V)\big)$] the
$\C$-linear space of $n\times n$-matrices $(s_{ij})_{i,j=1}^n$ where
the matrix elements $s_{ij}$ belong to $S_\C(V)=S_\C(V;A)$ [resp.\
to $S_A(V)$]. Note that the matrix multiplication is not defined.

Unless stated otherwise,  throughout this paper  $V$ is only assumed
to be a vector space, and {\it when we need a module structure, then
$V$ is an $A$-module.} For example, $M_n\big(S_A(V)\big)\subseteq
M_n\big(S_\C(V)\big)$ holds only when $V$ is an $A$-module so that
the left hand side makes sense.

We say that  $(s_{ij})_{i,j=1}^n\in M_n\big(S_\C(V)\big)$ is {\it
positive} if the matrix $\big(s_{ij}(v_i,v_j)\big)_{i,j=1}^n$ is a
positive element of $M_n(A)$ whenever $v_1,\dots,v_n\in V$.


For any set $X\ne\emptyset$, we say that a mapping $K:\,X\times X\to
S_\C(V)$ is a {\it positive-definite kernel} if, for all $n\in\N$
and $x_1,x_2,\ldots,x_n\in X$, the matrix $\big(K(x_i,x_j)
\big)_{i,j=1}^n\in M_n\big(S_\C(V)\big)$ is positive. If
$K:\,X\times X\to S_A(V)$ is a positive-definite kernel then we say
that $K$ is a {\it (positive-definite) $A$-kernel}. Next we
construct a Kolmogorov type decomposition for a positive-definite
kernel. We also consider the special case where the kernel is an
$A$-kernel.

Let $K:\,X\times X\to S_\C(V)$ be a positive-definite kernel. Let
$V^X_{\rm fin}$ be the  $\C$-linear space of functions $f:\,X\to V$
such that $f(x)\ne 0$ only for finitely many $x\in X$, and let
$V_\C^{\times X}$ [resp.\ $V^{\times X}_A$] be the $\C$-linear space
of functions $f:\,X\to V^\times_\C$ [resp.\ $f:\,X\to V^\times_A$].
Define a $\C$-linear mapping $\tilde K:\,V^X_{\rm fin}\to V^{\times
X}_\C$ by
$$
\<v,\tilde Kf(x)\>:=\sum_{x'\in
X}K(x,x')\big(v,f(x')\big),\hspace{1cm}v\in V,\;f\in V^X_{\rm
fin},\;x\in X.
$$
Let ${\rm Im}\,\tilde K$ be the image of $\tilde K$. Consider the
mapping $\<\,\cdot\, | \, \cdot\,\>$ defined as
$$
{\rm Im}\,\tilde K\times {\rm Im}\,\tilde K\ni (\tilde Kf,\tilde
Kf')\mapsto\<\tilde Kf|\tilde Kf'\>:= \sum_{x,x'\in
X}K(x,x')\big(f(x),f'(x')\big)\in A.
$$
It is easy to see that it is positive (i.e.\ $\<\tilde Kf|\tilde
Kf\>\ge 0$), $\C$-sesquilinear, conjugate symmetric, and well
defined since, if $\tilde Kf=\tilde Kf'$, $f,\,f'\in V^X_{\rm fin}$,
then $ \sum_{x'\in X}K(x,x')\big(v,f(x')-f'(x')\big)=0 $ for all
$v\in V$, $x\in X$, implying that
$$
\<\tilde Kf''|\tilde Kf-\tilde Kf'\>=\<\tilde Kf-\tilde Kf'|\tilde
Kf''\>^*= \sum_{x,x'\in X}K(x,x')\big(f''(x),f(x')-f'(x')\big)=0
$$
for all $f''\in V^X_{\rm fin}$.

Let $x\in X$ and define a $\C$-linear mapping $D(x):\,V\to {\rm
Im}\,\tilde K$ by
$$
D(x)v:=\tilde K f^v_x,\hspace{1cm}v\in V
$$
where $f^v_x(x):=v\in V$ and $f^v_x(x'):=0$ for all $x'\ne x$. Then
$$
\<D(x)v|D(x')v'\>=\<\tilde K f^v_x|\tilde K
f^{v'}_{x'}\>=K(x,x')(v,v')
$$
for all $x,\,x'\in X$ and $v,\,v'\in V$. Thus (by denoting $W={\rm
Im}\,\tilde K$) we have:

\begin{proposition}\label{p21}
Let $A$ be a *-algebra and $V$ a vector space. Let $X\neq\emptyset$
be a set and $K:\,X\times X\to S_\C(V;A)$ a positive-definite
kernel. There exists a vector space $W$ equipped with a positive,
$A$-valued, $\C$-sesquilinear form $\<\,\cdot\,|\,\cdot\,\>$, and a
mapping $D:\,X\to{\rm Lin}_\C(V,W)$ such that
$$
K(x,x')(v,v')=\<D(x)v|D(x')v'\>,\hspace{5mm}x,\,x'\in X,\;v,\,v'\in
V.
$$
We say that $(W,D)$ is a {\em Kolmogorov decomposition} for $K$.
\end{proposition}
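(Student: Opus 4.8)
The plan is to turn the construction preceding the statement into the desired objects and then check the properties. I would take $W := {\rm Im}\,\tilde K$, a $\C$-linear subspace of $V^{\times X}_\C$, where $\tilde K\colon V^X_{\rm fin}\to V^{\times X}_\C$ is the $\C$-linear map determined by $\<v,\tilde Kf(x)\> = \sum_{x'\in X}K(x,x')(v,f(x'))$ (the sum is finite since $f$ has finite support). On $W$ I would put the $A$-valued form $\<\tilde Kf|\tilde Kf'\> := \sum_{x,x'\in X}K(x,x')\big(f(x),f'(x')\big)$, and for each $x\in X$ define $D(x)\colon V\to W$ by $D(x)v := \tilde K f^v_x$, where $f^v_x$ equals $v$ at $x$ and $0$ elsewhere. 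Since $v\mapsto f^v_x$ and $\tilde K$ are $\C$-linear, $D(x)\in{\rm Lin}_\C(V,W)$.

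Granting that the form is well defined on $W$ (see below), its remaining properties are routine. It is $\C$-sesquilinear because each $K(x,x')$ is and $\tilde K$ is $\C$-linear, and conjugate symmetric because $K(x,x')(v,v')^* = K(x',x)(v',v)$ for all $x,x',v,v'$ --- this being the relation $a_{12}^*=a_{21}$ for positive matrices over $A$ (noted above) applied to the $2\times2$ matrix obtained by evaluating the positive matrix $\big(K(x_i,x_j)\big)$ (with $x_1=x$, $x_2=x'$) at $v_1=v$, $v_2=v'$. For positivity, let $f\in V^X_{\rm fin}$ have support $\{x_1,\dots,x_n\}$ and set $v_i:=f(x_i)$; then $\<\tilde Kf|\tilde Kf\> = \sum_{i,j=1}^n K(x_i,x_j)(v_i,v_j)$ is the sum of all entries of $\big(K(x_i,x_j)(v_i,v_j)\big)_{i,j=1}^n$, which is a positive element of $M_n(A)$ since $\big(K(x_i,x_j)\big)_{i,j=1}^n\in M_n\big(S_\C(V)\big)$ is positive; the entry-sum of a positive matrix over $A$ is positive (as noted above), so $\<\tilde Kf|\tilde Kf\>\ge0$. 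Finally, for $x,x'\in X$ and $v,v'\in V$ the definitions give
$$
\<D(x)v|D(x')v'\> = \<\tilde K f^v_x|\tilde K f^{v'}_{x'}\> = \sum_{y,y'\in X}K(y,y')\big(f^v_x(y),f^{v'}_{x'}(y')\big) = K(x,x')(v,v'),
$$
since $f^v_x$, $f^{v'}_{x'}$ are supported at $x$, $x'$; this is the asserted identity.

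The main obstacle is therefore the well-definedness of $\<\,\cdot\,|\,\cdot\,\>$ on $W$, i.e.\ that the formula, written in terms of representatives $f,f'$, depends only on $\tilde Kf$, $\tilde Kf'$. I would base this on the identity
$$
\<\tilde Kf''|\tilde Kf\> - \<\tilde Kf''|\tilde Kf'\> = \sum_{x\in X}\Big(\sum_{x'\in X}K(x,x')\big(f''(x),f(x')-f'(x')\big)\Big),
$$
valid for all $f,f',f''\in V^X_{\rm fin}$. If $\tilde Kf=\tilde Kf'$, then $\sum_{x'\in X}K(x,x')(v,f(x')-f'(x')) = \<v,\tilde Kf(x)-\tilde Kf'(x)\> = 0$ for every $v\in V$ and $x\in X$; taking $v=f''(x)$ and summing over $x$ shows the right-hand side vanishes, so $\<\tilde Kf''|\tilde Kf\>$ is unchanged under a change of representative in the second slot, and conjugate symmetry transfers this to the first slot. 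Hence the form descends to $W$, and $(W,D)$ is the required Kolmogorov decomposition for $K$; everything beyond this well-definedness point is bookkeeping.
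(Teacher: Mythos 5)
Your proposal is correct and follows essentially the same route as the paper: you take $W={\rm Im}\,\tilde K$, define the form on representatives, verify well-definedness via the identity $\<\tilde Kf''|\tilde Kf-\tilde Kf'\>=\sum_{x,x'}K(x,x')\big(f''(x),f(x')-f'(x')\big)$ together with conjugate symmetry, and set $D(x)v=\tilde Kf^v_x$. The additional details you supply (positivity from the entry-sum of a positive matrix, symmetry from $a_{ij}^*=a_{ji}$) are exactly the facts the paper records just before the construction.
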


\begin{remark}\label{rema}
{\it If $K$ is an $A$-kernel,} then ${\rm Im}\,\tilde K$ is
contained in $V^{\times X}_A$ and, since $V$ and $V^\times_A$ are
$A$-modules, by defining the module products pointwise, $V^X_{\rm
fin}$, $V^{\times X}_A$ and ${\rm Im}\,\tilde K$ become $A$-modules;
then also $\tilde K$ is $A$-linear, that is,
$$
\<v,\tilde K(f\cdot a)(x)\>=\sum_{x'\in X}K(x,x')\big(v,f(x')\cdot
a\big)= \sum_{x'\in X}K(x,x')\big(v,f(x')\big)a=\<v,[(\tilde
Kf)a](x)\>,
$$
and $\<\,\cdot\, | \, \cdot\,\>$ is $A$-sesquilinear and thus a
semi-inner product. Moreover, $D(x)$ can be considered as an
$A$-linear mapping from $V$ to ${\rm Im}\,\tilde K$.
\end{remark}

\subsection{The $C^*$-algebra case}
{\it In this subsection we assume that $A$ is a $C^*$-algebra and
$K$ is an $A$-kernel.} Now $\<\,\cdot\, | \, \cdot\,\>$ is an inner
product. Indeed, by remark \ref{rema}, we need only to prove the
definiteness condition: Assume that $\<\tilde Kf|\tilde Kf\>=0$
where $f\in V^X_{\rm fin}$. From the Cauchy-Schwarz inequality, it
follows that $\<\tilde Kf'|\tilde Kf\>=0$ for all $f'\in V^X_{\rm
fin}$. Especially, this holds for $f'=f^v_x$. Since
$$
\<\tilde Kf'|\tilde Kf\>= \sum_{x'\in X}\left[\sum_{x\in X}
K(x',x)\big(f'(x'),f(x)\big)\right] =\sum_{x'\in X}\<f'(x'),\tilde
Kf(x')\>,
$$
by putting $f'=f^v_x$, we get
$$
\<\tilde Kf^v_x|\tilde Kf\>=\<v,\tilde Kf(x)\>.
$$
Now, since $\<v,\tilde Kf(x)\>=0$ for all $v\in V$, we get $\tilde
Kf(x)=0$. But this holds for all $x\in X$ implying that $\tilde
Kf=0$. The above inner product defines a norm
$Kf\mapsto\sqrt{\|\<\tilde Kf|\tilde Kf\>\|_A}$. Let $M$ be the
completion of ${\rm Im}\,\tilde K$ with respect to the above norm.
It is straightforward to verify $M$ is a Hilbert $C^*$-module over
$A$ \cite[p.\ 4]{La95}.

Let $x\in X$ and define a 'dual mapping' $D(x)^\times:\,M\to
V_A^\times$ of $D(x):\,V\to M$ as follows:
$$
\<v,D(x)^\times m\>:=\<D(x)v|m\>=\<\tilde K
f^v_x|m\>,\hspace{1cm}v\in V,\;m\in M.
$$
It is clearly $A$- (and hence $\C$-)linear. For example, if
$m=D(x')v'=\tilde K f^{v'}_{x'}$ we get
$$
\<v,D(x)^\times D(x')v'\> =\<\tilde K f^v_x|\tilde K f^{v'}_{x'}\>
=K(x,x')(v,v'),\hspace{1cm}x,\,x'\in X,\;v,\,v'\in V,
$$
or, briefly,
$$
K(x,x')=D(x)^\times D(x')
$$
for all $x,\,x'\in X$. Next we study the minimality of $M$.

For any $f\in V_{\rm fin}^X$ we can write $f=\sum_{i=1}^k
f^{v_i}_{x_i}$ where $v_i\in V$ and $x_i\in X$ for all
$i=1,2,\ldots,k$, $k\in\N$. Since $\tilde Kf=\sum_{i=1}^k\tilde
Kf^{v_i}_{x_i}=\sum_{i=1}^k D(x_i)v_i$,
$$
\tilde Kf\in \lin_\C\cup_{x\in X}D(x)V=\lin_\C\{D(x)v\,|\,x\in
X,\,v\in V\}=\lin_\C\{\tilde Kf^v_x\,|\,x\in X,\,v\in V\}
$$
and, thus, $\lin_\C\cup_{x\in X}D(x)V$ is dense in $M$ (minimality).
Thus, we have proved the first part of the following generalization
of the Kolmogorov decomposition:
\begin{theorem}\label{t1}
Let $V$ be a module over a $C^*$-algebra $A$. Let $X\neq\emptyset$
be a set and $K:\,X\times X\to S_A(V)$ a positive-definite
$A$-kernel. There exists a Hilbert $C^*$-module $M$ over $A$ and a
mapping $D:\,X\to{\rm Lin}_A(V,M)$ such that
\begin{itemize}
\item[(i)]
$ K(x,x')(v,v')=\<v,D(x)^\times D(x')v'\>,\hspace{5mm}x,\,x'\in
X,\;v,\,v'\in V, $
\item[(ii)]
$\lin_\C\cup_{x\in X}D(x)V$ is dense in $M$.
\end{itemize}
We say that $(M,D)$ is a {\em minimal Kolmogorov decomposition} for
$K$.

If $(M',D')$ is another minimal Kolmogorov decomposition for $K$
then there exists a unitary $U:\,M\to M'$ such that $UD(x)=D'(x)$
for all $x\in X$.
\end{theorem}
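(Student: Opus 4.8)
\emph{Proof plan for the uniqueness statement.} The plan is to construct $U$ first on the dense submodules and then extend it by continuity, in the usual GNS style. Write $M_0:=\lin_\C\{D(x)v\mid x\in X,\,v\in V\}$ and $M_0':=\lin_\C\{D'(x)v\mid x\in X,\,v\in V\}$, which by condition (ii) are dense in $M$ and $M'$ respectively. For each $x\in X$ let $D(x)^\times\colon M\to V^\times_A$ be given by $\<v,D(x)^\times m\>:=\<D(x)v|m\>$, and define $D'(x)^\times\colon M'\to V^\times_A$ analogously; both are $A$-linear in the module variable, and with this notation (i) reads $K(x,x')(v,v')=\<D(x)v|D(x')v'\>=\<D'(x)v|D'(x')v'\>$. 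I would then try to define $U_0\colon M_0\to M_0'$ by
$$
U_0\Big(\sum_{i=1}^k D(x_i)v_i\Big):=\sum_{i=1}^k D'(x_i)v_i .
$$

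The crux is the computation that, for all $x_1,\dots,x_k,x_1',\dots,x_l'\in X$ and $v_1,\dots,v_k,v_1',\dots,v_l'\in V$,
$$
\Big\langle\sum_{i=1}^k D(x_i)v_i\,\Big|\,\sum_{j=1}^l D(x_j')v_j'\Big\rangle
=\sum_{i,j}\<D(x_i)v_i|D(x_j')v_j'\>
=\sum_{i,j}K(x_i,x_j')(v_i,v_j'),
$$
and that the very same identity holds with $D'$ in place of $D$, giving the identical right-hand side. Hence the inner products on $M_0$ and on $M_0'$ are both completely determined by $K$. Taking the two expressions to be equal and specializing the second sum to coincide with the first, one sees that $\sum_i D(x_i)v_i=0$ forces $\<\sum_i D'(x_i)v_i\,|\,\sum_i D'(x_i)v_i\>=0$, so $\sum_i D'(x_i)v_i=0$ by definiteness of the inner product on $M'$; thus $U_0$ is well defined. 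The same identity shows $\<U_0m|U_0m'\>=\<m|m'\>$ for all $m,m'\in M_0$, so in particular $\no{U_0m}=\no{m}$; and $U_0$ is $\C$-linear and $A$-linear because $cD(x)v=D(x)(cv)$ and $D(x)v\cdot a=D(x)(v\cdot a)$, with the corresponding identities for $D'$.

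Being a norm isometry, $U_0$ extends uniquely to an isometry $U\colon M\to M'$ on all of $M=\overline{M_0}$ (using completeness of $M'$), and by continuity of the inner product with respect to the norm, $U$ still satisfies $\<Um|Um'\>=\<m|m'\>$ for all $m,m'\in M$. Since $U$ is isometric and $M$ complete, $U(M)$ is closed in $M'$, and it contains the dense set $M_0'=U_0(M_0)$, so $U$ is surjective. A surjective inner-product-preserving map between Hilbert $C^*$-modules is automatically adjointable and $A$-linear with $U^*=U^{-1}$ (standard, cf.\ \cite{La95}), i.e.\ a unitary, and $UD(x)v=U_0D(x)v=D'(x)v$ for all $x\in X$, $v\in V$, that is $UD(x)=D'(x)$. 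The only point requiring care is the well-definedness of $U_0$, which I expect to be the main (though mild) obstacle; it rests precisely on the fact that, thanks to (i) and (ii), the inner products on the dense submodules are forced by the kernel alone, and on the definiteness of the $C^*$-module inner product to pass from vanishing norm to vanishing vector.
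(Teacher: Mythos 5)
Your argument for the uniqueness part is correct and is essentially the paper's own proof: the paper likewise notes that $\bigl\|\sum_i D(x_i)v_i\bigr\|^2=\bigl\|\sum_{i,j}K(x_i,x_j)(v_i,v_j)\bigr\|_A=\bigl\|\sum_i D'(x_i)v_i\bigr\|^2$, concludes that $D(x)v\mapsto D'(x)v$ gives a well-defined isometry on the dense linear span, and extends it to the required unitary, with your write-up merely making the well-definedness, inner-product preservation, surjectivity and $A$-linearity explicit. (The existence part (i)--(ii) is proved in the paper by the explicit construction preceding the theorem statement, so, like the paper's proof environment, your proposal correctly treats only the uniqueness clause.)
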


\begin{proof}
Let  $(M,D)$ and $(M',D')$ be minimal Kolmogorov decompositions for
$K$. Let $x_1,\ldots,x_n\in X$ and $v_1,\ldots,v_n\in V$. Then
$$
\left\|\sum_{i=1}^n D(x_i)v_i\right\|^2 =\left\|\sum_{i,j=1}^n
\<D(x_i)v_i|D(x_j)v_j\>\right\|_A =\left\|\sum_{i,j=1}^n
K(x_i,x_j)(v_i,v_j)\right\|_A =\left\|\sum_{i=1}^n
D'(x_i)v_i\right\|^2,
$$
and, hence, there is a well-defined isometry from a dense linear
subspace of $M$ to $M'$ that maps any $D(x)v$ to $D'(x)v$. Its
extension $U:\,M\to M'$ is the required unitary map.
\end{proof}

\section{A generalization of the KSGNS construction}

Let $A$ and $B$ be *-algebras and $V$ a vector space. For any
$\C$-linear mapping $E:\,B\to S_\C(V)$ ($=S_\C(V;A)$) and $n\in\N$,
we define its $n^{th}$ {\it amplification} $E^{(n)}:\,M_n(B)\to
M_n\big(S_\C(V)\big)$ as
$E^{(n)}\big((b_{ij})_{i,j}\big):=\big(E(b_{ij})\big)_{i,j}$. For
example, $E^{(1)}=E$. Moreover, we say that $E^{(n)}$ is positive if
$E^{(n)}\big((b_{ij})_{i,j}\big)$ is positive for any positive
$(b_{ij})_{i,j}$. Especially, $E:\,B\to S_\C(V)$ {\em positive}, if
$E(b)\geq0$ for any $b\geq0$ in $B$. Now we are ready to define the
concept of complete positivity:

\begin{definition}
A mapping $E:\,B\to S_\C(V)$ is {\it completely positive} if $E$ is
$\C$-linear and $E^{(n)}$ is positive for any positive integer $n$.
\end{definition}
Let $E:\,B\to S_\C(V)$ be a $\C$-linear mapping. It is completely
positive if and only if  the mapping $\tilde E:\,B\times B\to
S_\C(V)$, $(b,b')\mapsto E(b^*b')$ is a positive-definite kernel.

\begin{lemma}\label{l1}
Let $A$ and $B$ be $C^*$-algebras, and $V$ be an $A$-module. Let
$E:\,B\to S_A(V)$ be a completely positive mapping. There exists a
Hilbert $C^*$-module $M$ over $A$ and a $\C$-linear mapping
$D:\,B\to{\rm Lin}_A(V,M)$ such that
\begin{itemize}
\item[(i)]
$ E(b^*b')(v,v')=\<v,D(b)^\times D(b')v'\>,\hspace{5mm}b,\,b'\in
B,\;v,\,v'\in V, $
\item[(ii)]
$\lin_\C\cup_{b\in B}D(b)V$ is dense in $M$.
\end{itemize}
Moreover, there exists a *-homomorphism $\pi:\,B\to L_A(M)$ such
that
$$
\pi(b)D(b')=D(bb'),\hspace{5mm}b,\,b'\in B.
$$
\end{lemma}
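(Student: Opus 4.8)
The plan is to obtain the pair $(M,D)$ by applying Theorem~\ref{t1} to the positive-definite $A$-kernel $K:\,B\times B\to S_A(V)$ defined by $K(b,b')=E(b^*b')=\tilde E(b,b')$, which is positive-definite precisely because $E$ is completely positive (by the remark immediately preceding this lemma). So there is a Hilbert $C^*$-module $M$ over $A$ and a map $D\colon B\to{\rm Lin}_A(V,M)$ with $K(b,b')(v,v')=\<v,D(b)^\times D(b')v'\>$ for all $b,b'\in B$, $v,v'\in V$, and with $\lin_\C\bigcup_{b\in B}D(b)V$ dense in $M$; this gives (i) and (ii) at once. One should also check that $D$ is $\C$-linear in $b$: from the construction in Section~3, $D(b)v=\tilde Kf^v_b$, and since $K$ is bilinear in its two arguments one verifies $\|D(b_1+cb_2)v-D(b_1)v-cD(b_2)v\|^2=0$ by expanding the inner product into finitely many terms $K(b_i,b_j)(v,v)$ and using $E$'s linearity — so $D$ descends to a $\C$-linear map $B\to{\rm Lin}_A(V,M)$.

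Next I would construct $\pi$. For fixed $b\in B$, the natural candidate is the map sending $D(b')v\mapsto D(bb')v$, extended linearly to the dense submodule $M_0:=\lin_\C\bigcup_{b'\in B}D(b')V$. To see this is well-defined and bounded I would estimate $\bigl\|\sum_i D(bb_i')v_i\bigr\|^2 = \bigl\|\sum_{i,j}\<v_i,D(bb_i')^\times D(bb_j')v_j\>\bigr\|_A = \bigl\|\sum_{i,j}E\bigl((bb_i')^*(bb_j')\bigr)(v_i,v_j)\bigr\|_A = \bigl\|\sum_{i,j}E\bigl(b_i'^*(b^*b)b_j'\bigr)(v_i,v_j)\bigr\|_A$. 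The standard trick is: for $\|b\|_B\le 1$ one has $\|b\|_B^2 1 - b^*b\ge 0$ in the unitization $\tilde B$, hence $b_i'^*(\|b\|_B^2 1-b^*b)b_j'$ assembles into a positive element of $M_n(\tilde B)$; applying the completely positive $E$ (extended to $\tilde B$, or arguing directly with the positive-definiteness of $\tilde E$) yields $\bigl(E(b_i'^*(b^*b)b_j')(v_i,v_j)\bigr)_{i,j}\le \|b\|_B^2\bigl(E(b_i'^*b_j')(v_i,v_j)\bigr)_{i,j}$ as matrices over $A$, whence $\bigl\|\sum_{i,j}E(b_i'^*(b^*b)b_j')(v_i,v_j)\bigr\|_A\le\|b\|_B^2\bigl\|\sum_{i,j}E(b_i'^*b_j')(v_i,v_j)\bigr\|_A = \|b\|_B^2\bigl\|\sum_i D(b_i')v_i\bigr\|^2$. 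This shows the assignment is well-defined (take the left side $=0$) and extends to a bounded $A$-linear operator $\pi(b)$ on $M$ with $\|\pi(b)\|\le\|b\|_B$.

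It then remains to verify the algebraic properties. Linearity and the homomorphism property $\pi(b_1b_2)D(b')=D(b_1b_2b')=\pi(b_1)D(b_2b')=\pi(b_1)\pi(b_2)D(b')$ hold on the dense submodule $M_0$ and extend by continuity. For adjointability one checks on generators that $\<\pi(b)D(b_1)v_1\,|\,D(b_2)v_2\>=\<D(bb_1)v_1\,|\,D(b_2)v_2\> = E\bigl((bb_1)^*b_2\bigr)(v_1,v_2)=E\bigl(b_1^*(b^*b_2)\bigr)(v_1,v_2)=\<D(b_1)v_1\,|\,D(b^*b_2)v_2\>=\<D(b_1)v_1\,|\,\pi(b^*)D(b_2)v_2\>$, so $\pi(b)^*=\pi(b^*)$ and in particular $\pi(b)\in L_A(M)$; this also makes $\pi$ a $*$-homomorphism. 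The identity $\pi(b)D(b')=D(bb')$ is built into the definition. The main obstacle is the boundedness/well-definedness of $\pi(b)$: since $B$ need not be unital and $E$ is only defined on $B$, one must be slightly careful in justifying the matrix inequality $\bigl(E(b_i'^*(b^*b)b_j')(v_i,v_j)\bigr)\le\|b\|_B^2\bigl(E(b_i'^*b_j')(v_i,v_j)\bigr)$ — the clean route is to note $\|b\|_B^2 1-b^*b=\lim_\lambda c_\lambda^* c_\lambda$ along an approximate unit (or work in $\tilde B$ and observe the relevant products $b_i'^*(\cdots)b_j'$ all land back in $B$ since each $b_i'\in B$), so that positive-definiteness of $\tilde E$ on $B\times B$ alone suffices.
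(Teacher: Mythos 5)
Your proposal is correct, and the first half (obtaining $(M,D)$ from Theorem \ref{t1} applied to the kernel $(b,b')\mapsto E(b^*b')$, then checking $\C$-linearity of $D$ by expanding a squared norm) matches the paper. Where you genuinely diverge is in the construction of $\pi$. The paper does not prove any boundedness estimate: for each unitary $u$ in the unitization $\tilde B$ it observes that $b\mapsto D(ub)$ is another \emph{minimal} Kolmogorov decomposition of the same kernel (since $E(b^*u^*ub')=E(b^*b')$), so the uniqueness clause of Theorem \ref{t1} hands over a unitary $\pi(u)$ with $\pi(u)D(b)=D(ub)$; then $\pi(b)$ is defined by writing $b$ as a linear combination of four unitaries of $\tilde B$, and well-definedness, multiplicativity and adjointability all follow from $\pi(b)D(b')=D(bb')$ and density. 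You instead run the classical Stinespring/KSGNS argument: define $\pi(b)$ on the dense submodule by $D(b')v\mapsto D(bb')v$ and prove $\bigl\|\sum_i D(bb_i')v_i\bigr\|^2\le\|b\|_B^2\bigl\|\sum_i D(b_i')v_i\bigr\|^2$ by factoring $\|b\|_B^2e-b^*b=c^*c$ in $\tilde B$ and noting that the products $cb_i'$ fall back into $B$, so complete positivity of $E$ on $B$ alone gives the needed positivity. Both routes are sound and both use the unitization only through the ideal property of $B$ in $\tilde B$. Your route buys the explicit contraction bound $\|\pi(b)\|\le\|b\|_B$ directly and avoids invoking the uniqueness part of Theorem \ref{t1}; the paper's route avoids the square-root/positivity estimate altogether, getting boundedness for free because each $\pi(u)$ is unitary. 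One cosmetic caution: the intermediate assertion you state as a matrix inequality in $M_n(A)$ is not what you actually need or use -- what the argument requires (and what your factorization delivers) is positivity of the difference matrix $\bigl(E((cb_i')^*(cb_j'))(v_i,v_j)\bigr)_{i,j}$, whose entry sum being $\ge0$ yields the scalar inequality between the two sums in $A$, and then the norm inequality since $0\le x\le y$ implies $\|x\|_A\le\|y\|_A$.
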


\begin{proof}
Since, by Theorem \ref{t1},
$$
E(b^*b')(v,v')=\<v,D(b)^\times
D(b')v'\>=\<D(b)v|D(b')v'\>,\hspace{1cm}b,\,b'\in B,\;v,\,v'\in V,
$$
it follows that, for all $b,\,\un b,\,b'\in B$, $c\in\C$, and
$v,\,v'\in V$,
\begin{eqnarray*}
\<D(b+c\un b)v|D(b')v'\>&=& E((b+c\un b)^*b')(v,v')
=E(b^*b')(v,v')+\overline c E(\un b^*b')(v,v') \\
&=&\<D(b)v|D(b')v'\>+\overline c \<D(\un
b)v|D(b')v'\>=\<[D(b)+cD(\un b)]v|D(b')v'\>.
\end{eqnarray*}
Since $\lin_\C\cup_{b\in B}D(b)V$ is dense in $M$, the above
calculation implies that
$$
D(b+c\un b)v=[D(b)+cD(\un b)]v
$$
for all $v\in V$, that is, $D:\,B\to{\rm Lin}_A(V,M)$ is
$\C$-linear.

If $B$ is unital, we denote $\tilde B=B$. Otherwise, let $\tilde
B:=B\times\C\cong B+\C e$ be the unitisation of $B$ where the
identity of $\tilde B$ is ${e}:=(0,1)$. Let $u$ be a unitary element
of $\tilde B$ (i.e.\ $uu^*=u^*u={e}$). Since $B$ can be viewed as a
left and right ideal of $\tilde B$ it follows that $ub\in B$ for all
$b\in B$. Define a ($\C$-linear) mapping
$$
D':\,B\to{\rm Lin}_A(V,M),\;b\mapsto D'(b):=D(ub).
$$
Since
$$
\<D(ub)v|D(ub')v'\>=E(b^*u^*ub')(v,v')=E(b^*b')(v,v'),\hspace{1cm}v,\,v'\in
V,\;b,\,b'\in B,
$$
and $\lin_\C\cup_{b\in B}D(ub)V$ is obviously dense in $M$, the pair
$(M,D')$ is a minimal Kolmogorov decomposition for $\tilde E$. By
the second part of Theorem \ref{t1}, there exists a unitary mapping
$\pi(u):\,M\to M$, such that
$$
\pi(u)D(b)=D'(b)=D(ub)
$$
for all $b\in B$. It is well known that any $b\in B$ can be
represented as a $\C$-linear combination of four unitaries of
$\tilde B$: $b=\sum_{i=1}^4 c_i u_i$. Define
$$
\pi(b):=\sum_{i=1}^4 c_i\pi(u_i).
$$
Now, for all $b'\in B$,
$$
\pi(b)D(b')=\sum_{i=1}^4 c_i[\pi(u_i)D(b')]=\sum_{i=1}^4
c_iD(u_ib')= D\left(\sum_{i=1}^4 c_i u_i b'\right)=D(bb')
$$
and, from the density of $\lin_\C\cup_{b\in B}D(b)V$ in $M$, it
follows that
$$
\pi:\,B\to L_A(M),\;b\mapsto\pi(b)
$$
is well defined (i.e., independent of the representation of $b$).
Moreover, by the density argument and the equation
$\pi(b)D(b')=D(bb')$, we see immediately that
$$
\pi(b+b')=\pi(b)+\pi(b'),\; \pi(cb)=c\pi(b),\;
\pi(bb')=\pi(b)\pi(b'),\text{ and }\pi(b)^*=\pi\big(b^*\big)
$$
for all $b,\,b'\in B$ and $c\in\C$; that is, $\pi$ is a
*-homomorphism (and, thus, continuous).
\end{proof}

The following theorem is a generalization of the KSGNS-construction
for completely positive mappings $E:\,B\to L_A(M)$ where $B$ is
unital:

\begin{theorem}\label{p1}
Let $V$ be a module over a $C^*$-algebra $A$, $B$ a {\em unital}
$C^*$-algebra, and $E:\,B\to S_A(V)$ a completely positive mapping.
There exist a Hilbert $C^*$-module $M$ over $A$, a {\em unital}
*-homomorphism $\pi:\,B\to L_A(M)$, and an element $J\in{\rm
Lin}_A(V,M)$ such that
\begin{itemize}
\item[(i)]
$ E(b)(v,v')=\<v,J^\times\pi(b)Jv'\>=\<Jv|\pi(b)Jv'\>,
\hspace{5mm}b\in B,\;v,\,v'\in V, $
\item[(ii)]
$\lin_\C\,\pi(B)JV=\lin_\C \{ \pi (b)Jv\, | \, b\in B,\,v\in V\}$ is
dense in $M$.
\end{itemize}
We say that $(M,\pi,J)$ is a {\em minimal dilation} for $E$.

If $(M',\pi',J')$ is another minimal dilation for $E$ then there is
a unitary mapping $U:\,M\to M'$ such that
$$
\pi'(b)=U\pi(b)U^*, \hspace{5mm}b\in B,
$$
and $J'=UJ$.
\end{theorem}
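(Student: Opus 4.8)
The plan is to piggyback entirely on Lemma \ref{l1}, since the unitality of $B$ removes the complications handled there by unitisation. First I would apply Lemma \ref{l1} to $E$ to obtain a Hilbert $C^*$-module $M$, a $\C$-linear map $D:\,B\to{\rm Lin}_A(V,M)$ with $E(b^*b')(v,v')=\<D(b)v|D(b')v'\>$ and $\lin_\C\cup_{b\in B}D(b)V$ dense in $M$, together with a $*$-homomorphism $\pi:\,B\to L_A(M)$ satisfying $\pi(b)D(b')=D(bb')$. I would then put $J:=D(e)\in{\rm Lin}_A(V,M)$, where $e$ is the identity of $B$. Now $\pi(b)Jv=\pi(b)D(e)v=D(be)v=D(b)v$ for all $b\in B$, $v\in V$, so $\pi(B)JV=\bigcup_{b\in B}D(b)V$ and (ii) is exactly the density clause of Lemma \ref{l1}. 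For (i), using $e=e^*$ I would write $E(b)(v,v')=E(e^*b)(v,v')=\<D(e)v|D(b)v'\>=\<Jv|\pi(b)Jv'\>$, which equals $\<v,J^\times\pi(b)Jv'\>$ by the definition of the dual map $J^\times:\,M\to V^\times_A$, $\<v,J^\times m\>:=\<Jv|m\>$ (the well-definedness and $A$-linearity of $J^\times$ being verified exactly as for the maps $D(x)^\times$ in Section 3). Finally, $\pi(e)D(b')=D(eb')=D(b')$ for all $b'$, so $\pi(e)=I_M$ on the dense set $\lin_\C\cup_bD(b)V$, hence everywhere by continuity of $\pi$; thus $\pi$ is unital.

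For the uniqueness statement I would argue through the Kolmogorov picture. Given a second minimal dilation $(M',\pi',J')$, set $D'(b):=\pi'(b)J'\in{\rm Lin}_A(V,M')$; this is $\C$-linear in $b$, and using (i) for $(M',\pi',J')$ together with the fact that $\pi'$ is a $*$-homomorphism one gets $\<D'(b)v|D'(b')v'\>=\<J'v|\pi'(b^*b')J'v'\>=E(b^*b')(v,v')$, so $E(b^*b')(v,v')=\<v,D'(b)^\times D'(b')v'\>$. Since $\pi'$ is unital, $\lin_\C\cup_bD'(b)V=\lin_\C\pi'(B)J'V$ is dense in $M'$, so $(M',D')$ is a minimal Kolmogorov decomposition for the positive-definite $A$-kernel $(b,b')\mapsto E(b^*b')$. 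The same holds for $(M,D)$, because $D(b)=\pi(b)D(e)=\pi(b)J$. By the uniqueness part of Theorem \ref{t1} there is a unitary $U:\,M\to M'$ with $UD(b)=D'(b)$ for every $b\in B$. Then $UJ=UD(e)=D'(e)=\pi'(e)J'=J'$, and for all $b,\,b'\in B$, $U\pi(b)D(b')=UD(bb')=D'(bb')=\pi'(b)D'(b')=\pi'(b)UD(b')$; by density of $\lin_\C\cup_{b'}D(b')V$ this yields $U\pi(b)=\pi'(b)U$, i.e.\ $\pi'(b)=U\pi(b)U^*$.

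I do not anticipate a genuine obstacle: the substance is all in Lemma \ref{l1} and Theorem \ref{t1}, and what remains is essentially bookkeeping. The only points deserving care are the identifications $\pi(B)JV=\bigcup_bD(b)V$ and $D(b)=\pi(b)J$ (so that the minimality conditions translate cleanly between the dilation picture and the Kolmogorov picture, which is what lets Theorem \ref{t1} apply), and the verification that $J^\times$ is a well-defined $A$-antilinear-valued map so that the expression $J^\times\pi(b)J$ in (i) makes sense.
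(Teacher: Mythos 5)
Your proof is correct and follows essentially the same route as the paper's: take $M$, $D$, $\pi$ from Lemma \ref{l1}, set $J:=D(e)$, read off (i) and (ii) from $D(b)=\pi(b)J$, and get unitality of $\pi$ from the density of $\lin_\C\cup_{b\in B}D(b)V$. The only organizational difference is in the uniqueness part, where you factor the argument through the uniqueness clause of Theorem \ref{t1} applied to the kernel $(b,b')\mapsto E(b^*b')$, while the paper repeats the underlying isometry-on-a-dense-subspace computation inline; the substance is identical.
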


\begin{proof}
Let $M$, $D$, and $\pi$ be as in Lemma \ref{l1}, and let $e$ and
$I_M$ be the units of $B$ and $L_A(M)$, respectively. Define
$J:=D(e)$. Now, for all $b\in B$ and $v,\,v'\in V$,
\begin{eqnarray*}
&D(b)=D(be)=\pi(b)D(e)=\pi(b)J, \\
&E(b)(v,v')=E(e^*b)(v,v')=\<D(e)v|D(b)v'\>=\<Jv|\pi(b)Jv'\>.
\end{eqnarray*}
In addition, $\lin_\C \{ \pi (b)Jv\, | \, b\in B,\,v\in
V\}=\lin_\C\{D(b)v\, | \, b\in B,\,v\in V\}$ is dense in $M$. Since
$\pi(b)m=\pi(eb)m=\pi(e)\pi(b)m$ for all $b\in B$ and $m\in M$, it
follows, e.g., from the density argument above that $\pi(e)=I_M$.
Next we prove the uniqueness assertion.

Let $(M',\pi',J')$ be a minimal dilation for $E$. For all
$b_1,\ldots,b_n\in B$ and $v_1,\ldots,v_n\in V$, by (i),
\begin{eqnarray*}
\left\< \sum_{i=1}^n\pi'(b_i)J' v_i\bigg| \sum_{j=1}^n\pi'(b_j)J'
v_j \right\> &=& \sum_{i,j=1}^n \left\< J' v_i\big|\pi'(b_i^*b_j)J'
v_j \right\> =
\sum_{i,j=1}^n E(b_i^*b_j)(v_i,v_j) \\
&=& \left\< \sum_{i=1}^n\pi(b_i)J v_i\bigg| \sum_{j=1}^n\pi(b_j)J
v_j \right\>.
\end{eqnarray*}
Hence, the mapping which maps $\sum_{i=1}^n\pi(b_i)J v_i$ to
$\sum_{i=1}^n\pi'(b_i)J' v_i$, is well defined, $A$-linear,
($\C$-linear), isometric, and continuous, and extends to a
continuous map $U:\,M\to M'$ by (ii). Since $U$ is an isometric and
surjective $A$-linear map, it is unitary \cite[Theorem 3.5, p.\
26]{La95}.

For all $b,\,b_1,\ldots,b_n\in B$ and $v_1,\ldots,v_n\in V$,
$$
U\pi(b)U^*\sum_{i=1}^n\pi'(b_i)J' v_i= U\sum_{i=1}^n\pi(bb_i)J v_i
=\sum_{i=1}^n\pi'(bb_i)J' v_i =\pi'(b)\sum_{i=1}^n\pi'(b_i)J' v_i,
$$
and by (ii) it follows that
$$
\pi'(b)=U\pi(b)U^*, \hspace{5mm}b\in B.
$$
Moreover, since $\pi'(e)=I_{M'}$,
$$
UJv=UD(e)v=U\pi(e)Jv=\pi'(e)J'v=J'v,\hspace{5mm}v\in V,
$$
and, thus, $J'=UJ$.
\end{proof}

It is worth noting that, when $B$ is not unital, the preceding
theorem cannot be easily extended to that case. Namely, $V$ is not
assumed to be an inner product module and $E$ is not necessarily a
bounded mapping so that one could extend $E$ to the unitisation
$\tilde B$ by a standard method, namely, by declaring that
$E(e)=\|E\|I_V$ (see, e.g.\ \cite[p.\ 55]{La95}). In the following
subsection we address the question of to what extent our theory can
be extended to the nonunital case and what kind of modifications are
needed.

\subsection{The nonunital case}
Let $A$ and $B$ be $C^*$-algebras. We consider a positive linear map
$E:\,B\to S_\C(V;A)$ where $V$ is a vector space. By polarization,
every linear map $E_{v,{v'}}:B\to A$ defined by $E_{v,{v'}}(b)=
E(b)(v,{v'})$ is continuous. Let $E_{v,{v'}}^{**}:B^{**}\to A^{**}$
be its second adjoint and define $\hat E(b)(v,{v'})$ for any $b\in
B^{**}$ and $v,\,{v'}\in V$ by $\hat
E(b)(v,{v'}):=E_{v,{v'}}^{**}(b)$. When the biduals $A^{**}$ and
$B^{**}$ are regarded as von Neumann algebras in the usual way, we
have thus obtained a positive linear map $\hat E:B^{**}\to
S_\C(V;{A^{**}})$. The next lemma is used to show that $\hat E$ is
completely positive if $E$ is completely positive.

\begin{lemma} If $X_1,\dots, X_p$ are Banach spaces, then all
Banach space norms on $X=X_1\oplus\cdots\oplus X_p$, such that the
canonical embeddings $\eta_i:X_i\to X$ defined by $\eta_i(x_i)=
(0,\dots,0,x_i,0,\dots,0)$ are continuous, are equivalent.
\end{lemma}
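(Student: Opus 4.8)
The plan is to reduce everything to a single reference norm and then invoke the open mapping theorem. First I would introduce on $X$ the norm $\|(x_1,\dots,x_p)\|_0:=\|x_1\|_{X_1}+\cdots+\|x_p\|_{X_p}$ and recall that $(X,\|\cdot\|_0)$ is a Banach space, being a finite direct sum of Banach spaces, and that each canonical embedding $\eta_i$ is continuous (indeed isometric) for this norm. It then suffices to prove that every Banach space norm $\|\cdot\|$ on $X$ for which all the $\eta_i$ are continuous is equivalent to $\|\cdot\|_0$; equivalence of any two norms as in the statement follows by transitivity.

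Next I would show that the identity map $\iota\colon(X,\|\cdot\|_0)\to(X,\|\cdot\|)$ is bounded. This is the only place the hypothesis on the $\eta_i$ enters: writing $x=(x_1,\dots,x_p)=\sum_{i=1}^p\eta_i(x_i)$ and putting $C:=\max_{1\le i\le p}\|\eta_i\|$, the triangle inequality gives $\|x\|\le\sum_{i=1}^p\|\eta_i(x_i)\|\le C\sum_{i=1}^p\|x_i\|_{X_i}=C\|x\|_0$, so $\iota$ is continuous.

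Finally, since $\iota$ is a continuous bijection between the Banach spaces $(X,\|\cdot\|_0)$ and $(X,\|\cdot\|)$, the bounded inverse theorem (a form of the open mapping theorem) shows that $\iota^{-1}$ is also bounded, i.e.\ $\|x\|_0\le C'\|x\|$ for some $C'>0$ and all $x\in X$. Combining the two estimates yields $(C')^{-1}\|x\|_0\le\|x\|\le C\|x\|_0$, so $\|\cdot\|$ and $\|\cdot\|_0$ are equivalent, and hence all norms as in the statement are pairwise equivalent.

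There is no real obstacle here; the only points requiring (routine) care are verifying that the reference norm $\|\cdot\|_0$ is complete and observing that the continuity hypothesis on the embeddings is exactly what delivers the one-sided estimate needed to set up the open mapping argument.
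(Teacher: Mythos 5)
Your proof is correct and follows essentially the same route as the paper: fix a complete reference norm on the direct sum (the paper uses the max norm, you use the sum norm, which is immaterial), use the continuity of the embeddings to bound the identity map from the reference norm to the given norm, and conclude by the open mapping theorem.
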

\begin{proof} Let $\no{\cdot}$ be any complete norm on $X$ satisfying
the stated requirement. Define
${\no{x}}_\infty:=\max\{\no{x_1},\dots,\no{x_p}\}$ for
$x=(x_1,\dots,x_p)\in X$. Each projection $\operatorname{pr}_i:X\to
X_i$ is continuous from $(X,\no{\cdot}_\infty)$ to $X_i$, and since
each $\eta_i$ is by assumption with respect to $\no{\cdot}$, the
identity map $\sum_{i=1}^p\eta_i\circ\operatorname{pr}_i$ is
continuous from $(X,\no{\cdot}_\infty)$ to $(X,\no{\cdot})$, and so
it is a homeomorphism by the open mapping theorem.
\end{proof}

It follows that the $C^*$-algebra norm on $M_n(A)$ is equivalent to
the norm $\no{\cdot}$ defined as the maximum of the norms of the
matrix entries. The dual of $(M_n(B),\no{\cdot}_\infty)$ is known to
be $(M_n(B^*),\no{\cdot}_1)$ (the $l^1$-direct sum), and the dual of
$(M_n(B^*),\no{\cdot}_1)$ is $(M_n(B^{**}),\no{\cdot}_\infty)$. It
follows from the preceding lemma that the bidual of $M_n(B)$ is
canonically isomorphic to the $C^*$-algebra $M_n(B^{**})$.

Let $E:\,B\to S_\C(V;A)$ be completely positive and
$E^{(n)}:\,M_n(B)\to M_n\big(S_\C(V;A)\big)$ be the $n^{\rm th}$
amplification of $E$ (which is thus supposed to be positive). Since
$M_n(B)^{**}\cong M_n(B^{**})$, the $n^{\rm th}$ amplification $\hat
E^{(n)}$ of $\hat E$ can be viewed as a mapping from $M_n(B)^{**}$
to $M_n\big(S_\C(V;{A^{**}})\big)$. Fix $v_1,\ldots,v_n\in V$. Since
$M_n(B)\ni (b_{ij})_{i,j=1}^n\mapsto \sum_{i,j=1}^n
E(b_{ij})(v_i,v_j)\in A$ is positive and continuous its second
adjoint $M_n(B)^{**}\to A^{**}:\, (b_{ij})_{i,j=1}^n\mapsto
\sum_{i,j=1}^n E_{v_i,v_j}^{**}(b_{ij})=\sum_{i,j=1}^n\hat
E(b_{ij})(v_i,v_j)$ is also. Hence, $\hat E^{(n)}$ is positive and
$\hat E$ completely positive.

Suppose then that $V$ is an $A$-module and $E:\,B\to S_A(V)$ is
completely positive but $B$ {\it is not necessarily unital.} Now
$\hat E:B^{**}\to S_\C(V;{A^{**}})$ is completely positive and
$B^{**}$ is unital. Let $b\in B^{**}$, $v,\,v'\in V$, and $a\in A$.
Since $\hat E(b)(v,{v'})=E_{v,{v'}}^{**}(b)$ it follows that $\hat
E(b)(v,v'\cdot a)=\hat E(b)(v,v')a$ and $\hat E(b)(v\cdot
a,v')=a^*\hat E(b)(v,v')$ and, thus, $\hat E(b)$ is
$A$-sesquilinear.

The algebraic tensor product $V\otimes A^{**}$ becomes an
$A^{**}$-module when one defines a module product as $(v\otimes
a)\cdot a':=v\otimes (aa')$ for all $v\in V$ and $a,\,a'\in A^{**}$.
Define
$$
W:=\lin_\C\big\{(v\cdot a)\otimes a'-v\otimes (aa')\in V\otimes
A^{**}\,\big|\,v\in V,\,a\in A,\,a'\in A^{**}\big\}.
$$
Obviously, $W$ is an $A^{**}$-submodule of $V\otimes A^{**}$ so that
we can define a quotient $A^{**}$-module
$$
\tilde V:=(V\otimes A^{**})/W.
$$
As an $A^{**}$-module, $\tilde V$ is also an $A$-module and one has
the following $A$-linear mapping:
$$
T:\, V\to\tilde V,\; v\mapsto T(v):=[v\otimes e]=v\otimes e+W
$$
where $e$ is the unit of $A^{**}$.

\begin{lemma}\label{3.6}
If $\operatorname{Lin}_{A}(V,A^{**})$ is a separating set for $V$
then $T$ is injective and $V$ can be considered as an $A$-submodule
of $\tilde V$.
\end{lemma}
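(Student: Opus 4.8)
The plan is to exploit the separating hypothesis by converting each $A$-linear map $V\to A^{**}$ into a linear map on $V\otimes A^{**}$ that annihilates $W$, so that it descends to $\tilde V$ and can be used to test whether $T(v)=0$.

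First I would fix $g\in\operatorname{Lin}_{A}(V,A^{**})$, where $A^{**}$ carries the natural $A$-module structure $m\cdot a:=ma$ with $a$ regarded as an element of $A^{**}$. Since the map $(v,a')\mapsto g(v)a'$ is $\C$-bilinear on $V\times A^{**}$, it induces a $\C$-linear map $\Phi_g\colon V\otimes A^{**}\to A^{**}$ determined by $\Phi_g(v\otimes a')=g(v)a'$; in fact $\Phi_g$ is $A^{**}$-linear, since $g(v)(a'a'')=(g(v)a')a''$. Next I would check that $\Phi_g$ kills the generators of $W$: using the $A$-linearity of $g$ and associativity in $A^{**}$,
$$
\Phi_g\big((v\cdot a)\otimes a'-v\otimes(aa')\big)=(g(v)a)a'-g(v)(aa')=0 .
$$
Because $W$ is the $\C$-linear span of such elements and $\Phi_g$ is $\C$-linear, $\Phi_g$ vanishes on $W$ and therefore factors through an $A$-linear map $\tilde\Phi_g\colon\tilde V\to A^{**}$ with $\tilde\Phi_g\big([x]\big)=\Phi_g(x)$; evaluating on $T(v)=[v\otimes e]$ gives $\tilde\Phi_g\big(T(v)\big)=g(v)e=g(v)$.

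With this in hand the injectivity is immediate: if $T(v)=0$ then $g(v)=\tilde\Phi_g\big(T(v)\big)=0$ for every $g\in\operatorname{Lin}_{A}(V,A^{**})$, and since this family separates the points of $V$ we conclude $v=0$. For the last assertion I would note that $(v\cdot a)\otimes e-v\otimes(ae)=(v\cdot a)\otimes e-v\otimes a\in W$, whence $[v\otimes a]=[(v\cdot a)\otimes e]=T(v\cdot a)$, i.e.\ $T(v)\cdot a=T(v\cdot a)$; thus $T(V)$ is an $A$-submodule of $\tilde V$ and $T$ is an isomorphism of $A$-modules from $V$ onto $T(V)$, so $V$ may be identified with $T(V)$.

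I do not expect a genuine obstacle here. The only thing demanding care is the bookkeeping for $\Phi_g$: it is built on the $\C$-tensor product $V\otimes A^{**}$ but must be compatible with the $A$- and $A^{**}$-actions, and one has to use precisely the $A$-module structure on $A^{**}$ given by multiplication (so that the $A$-linearity of $g$ forces $\Phi_g$ to annihilate $W$).
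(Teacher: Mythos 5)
Your proof is correct and follows essentially the same route as the paper: both arguments extend each $A$-linear map $g\colon V\to A^{**}$ to a map $v\otimes a'\mapsto g(v)a'$ on $V\otimes A^{**}$, verify that it annihilates the generators of $W$, and then use the separating hypothesis on $T(v)=[v\otimes e]$ to force $v=0$. Your additional check that $T(v\cdot a)=T(v)\cdot a$ is harmless but already covered by the paper's prior observation that $T$ is $A$-linear.
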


\begin{proof}
For any $f\in \operatorname{Lin}_{A}(V,A^{**})$ one can define a
$\C$-linear mapping $ \tilde f:\,V\otimes A^{**}\to A^{**} $ by
$\tilde f(v\otimes a'):=f(v)a'$ where $v\in V$ and $a'\in A^{**}$.
Now, for all $v\in V$, $a\in A$ and $a'\in A^{**}$,
$$
\tilde f\big((v\cdot a)\otimes a'\big)=f(v)aa'=\tilde
f\big(v\otimes(aa')\big)
$$
so that $\tilde f(W)=\{0\}$. Suppose then that $v\otimes e\in W$ for
some $v\in V$. Hence $f(v)=\tilde f(v\otimes e)=0=f(0)$ for all
$f\in \operatorname{Lin}_{A}(V,A^{**})$. If
$\operatorname{Lin}_{A}(V,A^{**})$ separates the points of $V$ it
follows that $v=0$ and $T$ is an injection.
\end{proof}

Let then $b\in B^{**}$. Since $\hat E(b)$ is $A$-sesquilinear one
can define an $A^{**}$-sesquilinear mapping $\tilde E(b):\,\tilde
V\times \tilde V\to A^{**}$ by defining that, for all $v,\,v'\in V$
and $a,\,a'\in A^{**}$,
$$
\tilde E(b)(v\otimes a+W,\,v'\otimes a'+W):=a^*\hat E(b)(v,v')a'.
$$
Indeed, $\tilde E(b)$ is well defined since, for all $w=\sum_{i=1}^m
\big[(v_i\cdot a_i)\otimes a'_i-v_i\otimes (a_ia'_i)\big]\in W$ and
$u=\sum_{j=1}^n v_j''\otimes a_j'' \in V\otimes A^{**}$ (here
$v_i,\,v_j''\in V$, $a_i\in A$, and $a_i',\,a_j''\in A^{**}$), one
gets
\begin{eqnarray*}
\tilde E(b)\left(w+W,\,u+W\right) &=& \sum_{i=1}^m \sum_{j=1}^n
\left[(a_i')^*\hat E(b)(v_i\cdot a_i,v''_j)a_j'' -(a_ia_i')^*\hat
E(b)(v_i,v''_j)a_j''\right]=0
\end{eqnarray*}
and $\tilde E(b)\left(u+W,\,w+W\right)=0$. Note that $\tilde
E(b)\big(T(v),T(v')\big)=\hat E(b)(v,v')=E_{v,{v'}}^{**}(b)$ for all
$v,\,v'\in V$. Especially, if $b\in B$ then $\tilde
E(b)\big(T(v),T(v')\big)=\hat E(b)(v,v')=E(b)(v,v')$ for all
$v,\,v'\in V$.

Now the mapping $\tilde E:\,B^{**}\to S_{A^{**}}(\tilde
V),\;b\mapsto \tilde E(b),$ is completely positive. Indeed, let
$(b_{ij})_{i,j}$ be positive $B^{**}$-valued $n\times n$-matrix and
$v_i\in V$ for all $i=1,\ldots,n$. Since the matrix $(\hat
E(b_{ij})(v_i,v_j))_{i,j}$ is positive it follows that $\hat
E(b_{ij})(v_i,v_j)=\sum_{m=1}^n a_{mi}^* a_{mj}$,
$i,\,j\in\{1,\ldots,n\}$, where $a_{mj}\in A^{**}$ for all
$m,\,j\in\{1,\ldots,n\}$. But then the matrix
$$
(i,j)\mapsto \tilde E(b)(v_i\otimes a_i+W,\,v_j\otimes
a_j+W)=\sum_{m=1}^n (a_{mi}a_i)^* a_{mj}a_j
$$
is positive for all $a_i\in A^{**}$, $i=1,\ldots,n$.

We have proved the following theorem:
\begin{theorem}\label{thr}
Let $A$ and $B$ be $C^*$-algebras, $V$ an $A$-module, and $E:\,B\to
S_A(V)$ completely positive. There exist
\begin{enumerate}
\item a module $\tilde V$ over $A^{**}$ and an $A$-linear mapping $T:\,V\to\tilde V$,
\item a completely positive mapping $\tilde E:\,B^{**}\to S_{A^{**}}(\tilde V)$ such that
$E(b)(v,v')=\tilde E(b)\big(T(v),T(v')\big)$ for all $b\in B$ and
$v,\,v'\in V$.
\end{enumerate}
\end{theorem}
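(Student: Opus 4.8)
The plan is to assemble the statement from the ingredients developed in the discussion preceding it; since everything of substance has already been verified above, the proof is essentially a matter of collecting and ordering those verifications. Let me describe the skeleton. First I would record the reduction from the possibly nonunital pair $(B,A)$ to the unital pair $(B^{**},A^{**})$. By polarization each scalar map $E_{v,v'}\colon B\to A$, $b\mapsto E(b)(v,v')$, is a $\C$-linear combination of the positive (hence norm-continuous) maps $b\mapsto E(b)(u,u)$, so $E_{v,v'}$ is bounded and its second adjoint $E_{v,v'}^{**}\colon B^{**}\to A^{**}$ is defined; setting $\hat E(b)(v,v'):=E_{v,v'}^{**}(b)$ gives a $\C$-linear $\hat E\colon B^{**}\to S_\C(V;A^{**})$ that restricts to $E$ on $B$. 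The two properties I need to survive the second-adjoint operation are (a) $A$-sesquilinearity of each $\hat E(b)$, which holds because the identities $\hat E(b)(v\cdot a,v')=a^*\hat E(b)(v,v')$ and $\hat E(b)(v,v'\cdot a)=\hat E(b)(v,v')a$ are valid for $b$ in the weak*-dense set $B$ while both sides are weak*-continuous in $b$; and (b) complete positivity of $\hat E$, for which I use the identification $M_n(B)^{**}\cong M_n(B^{**})$ (a consequence of the norm-equivalence lemma: the $C^*$-norm on a matrix algebra is equivalent to the maximum of the entry norms, and dualizing twice runs through the $\ell^1$- and $\ell^\infty$-direct sums). Fixing $n$ and $v_1,\dots,v_n$, the positive continuous map $M_n(B)\to A$, $(b_{ij})\mapsto\sum_{i,j}E(b_{ij})(v_i,v_j)$, has second adjoint equal to $(b_{ij})\mapsto\sum_{i,j}\hat E(b_{ij})(v_i,v_j)$, which is therefore positive; hence $\hat E^{(n)}$ is positive for all $n$.

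Next comes the module-theoretic half, whose point is to manufacture an $A^{**}$-module on which $\hat E(b)$ becomes $A^{**}$-sesquilinear. I would form the algebraic tensor product $V\otimes A^{**}$ with the $A^{**}$-module structure $(v\otimes a)\cdot a':=v\otimes(aa')$, let $W$ be the $A^{**}$-submodule spanned by the elements $(v\cdot a)\otimes a'-v\otimes(aa')$ with $v\in V$, $a\in A$, $a'\in A^{**}$ (which is exactly what forces the $A$-action on $V$ to agree with right multiplication by $A\subseteq A^{**}$ on the tensor factor), and put $\tilde V:=(V\otimes A^{**})/W$. Then $T\colon V\to\tilde V$, $v\mapsto[v\otimes e]$ with $e$ the unit of $A^{**}$, is $A$-linear; this gives item (1). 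No injectivity of $T$ is asserted here — that would require the separation hypothesis of Lemma \ref{3.6} — so there is nothing further to check for $T$.

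For item (2), for $b\in B^{**}$ I define $\tilde E(b)$ on representatives by $\tilde E(b)(v\otimes a+W,\,v'\otimes a'+W):=a^*\hat E(b)(v,v')a'$. The only genuine verification is that this is independent of the choice of representative, i.e.\ that it vanishes when either argument lies in $W$; this is a one-line computation using precisely the $A$-sesquilinearity of $\hat E(b)$ established in the first step. Sesquilinearity over $A^{**}$ of $\tilde E(b)$ and the identity $\tilde E(b)(T(v),T(v'))=\hat E(b)(v,v')$ are then immediate, and for $b\in B$ the right side is $E(b)(v,v')$, giving the compatibility statement. Complete positivity of $\tilde E\colon B^{**}\to S_{A^{**}}(\tilde V)$ follows from that of $\hat E$: given a positive matrix $(b_{ij})\in M_n(B^{**})$ and $v_1,\dots,v_n\in V$, write $\hat E(b_{ij})(v_i,v_j)=\sum_m a_{mi}^*a_{mj}$ with $a_{mi}\in A^{**}$; then for arbitrary $a_1,\dots,a_n\in A^{**}$ the matrix with $(i,j)$-entry $\tilde E(b_{ij})(v_i\otimes a_i+W,\,v_j\otimes a_j+W)=\sum_m(a_{mi}a_i)^*(a_{mj}a_j)$ is visibly positive in $M_n(A^{**})$, and since elements of $\tilde V$ are $\C$-linear combinations of classes $v\otimes a+W$ this suffices.

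The main obstacle — and really the only conceptual point — is the passage to biduals to create a unit: one must ensure that both the $A$-sesquilinear structure and the complete positivity of $E$ are preserved under the second-adjoint operation, the first by weak*-density of $B$ in $B^{**}$ together with separate weak*-continuity, the second by the identification $M_n(B)^{**}\cong M_n(B^{**})$. Once $B^{**}$ and $A^{**}$ are unital, everything else — the tensor-quotient construction of $\tilde V$, the definition of $\tilde E$, well-definedness across $W$, and the transfer of complete positivity — is formal. I would also flag that the theorem asserts neither minimality nor uniqueness of $(\tilde V,T,\tilde E)$, so there is no rigidity argument to supply here.
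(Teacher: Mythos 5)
Your proposal is correct and follows essentially the same route as the paper: passage to the biduals via the second adjoints $E_{v,v'}^{**}$, the identification $M_n(B)^{**}\cong M_n(B^{**})$ to transfer complete positivity, and the tensor--quotient construction $\tilde V=(V\otimes A^{**})/W$ with $\tilde E$ defined on representatives. Your explicit weak*-density/continuity justification of the $A$-sesquilinearity of $\hat E(b)$ is a welcome elaboration of a step the paper leaves terse.
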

Next we study the injectivity of $T$ in view of Lemma \ref{3.6}.

Define an $A$-submodule $V_E^0$ of $V$ as follows:
$$
V_E^0:=\big\{v'\in V\,\big|\,E(b)(v,v')=0,\;b\in B,\,v\in V\big\}.
$$
Let then $V_E:=V/V_E^0$. Since $E(b)(v+u,v'+u')=E(b)(v,v')$ for all
$b\in B$, $v,\,v'\in V$ and $u,\,u'\in V_E^0$ so that, without
restricting generality, we {\it identify} $E$ with a completely
positive mapping
$$
E:\,B\to S_A(V_E).
$$
Moreover, for all nonzero $v'\in V_E$ there exist $\un b\in B$ and
$\un v\in V_E$ such that $E(\un b)(\un v,v')\ne 0$. In other words,
for all nonzero $v'\in V_E$, we have an $f\in
\operatorname{Lin}_{A}(V_E,A)$ such that $f(v')\ne 0$ (e.g.\ choose
$f(v)=E(\un b)(\un v,v)$). Hence, given two $v\ne v'\in V_E$ we have
an $f'\in \operatorname{Lin}_{A}(V_E,A)$ such that $f'(v)\ne f'(v')$
and $\operatorname{Lin}_{A}(V_E,A)$ is a separating set for $V_E$.
By Lemma \ref{3.6} and the identification $V=V_E$, the corresponding
mapping $T$ is an injection and $V_E$ can be considered as an
$A$-submodule of $\tilde V=\tilde V_E$. Theorem \ref{thr} shows
that, without restricting generality, we may always consider
completely positive mappings $E:\,B\to S_A(V_E)$ where $A$ and $B$
are unital $C^*$-algebras or even von Neumann algebras.     

Theorems \ref{p1} and \ref{thr} imply the following corollary:

\begin{corollary}Let $A$ and $B$ be $C^*$-algebras, $V$ an $A$-module,
and $E:\,B\to S_A(V)$ completely positive. There exist a
$C^*$-module $M$ over $A^{**}$, a unital *-homomorphism
$\pi:\,B^{**}\to L_{A^{**}}(M)$, and an element $J\in{\rm
Lin}_{A}(V_E,M)$ such that
$$E(b)(v,v')=\<v,J^\times\pi(b)Jv'\>=\<Jv|\pi(b)Jv'\>$$  for all $b\in B$ and $v,\,v'\in V_E.$
\end{corollary}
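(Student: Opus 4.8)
The plan is to combine Theorems \ref{p1} and \ref{thr} essentially by composition. First I would invoke Theorem \ref{thr}: starting from the completely positive map $E:\,B\to S_A(V)$, replace $V$ by $V_E=V/V_E^0$ so that $\operatorname{Lin}_A(V_E,A)$ separates points of $V_E$ (this reduction is exactly the discussion preceding the corollary, and by the displayed formula for $f$ it is legitimate). Then Theorem \ref{thr} yields an $A^{**}$-module $\tilde V_E$, an $A$-linear injection $T:\,V_E\to\tilde V_E$ (injective by Lemma \ref{3.6}), and a completely positive map $\tilde E:\,B^{**}\to S_{A^{**}}(\tilde V_E)$ with
$$
E(b)(v,v')=\tilde E(b)\big(T(v),T(v')\big),\hspace{5mm}b\in B,\;v,\,v'\in V_E.
$$

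Next I would apply Theorem \ref{p1} to $\tilde E$, which is permissible because $B^{**}$ is a unital $C^*$-algebra, $A^{**}$ is a $C^*$-algebra, and $\tilde V_E$ is an $A^{**}$-module. This produces a Hilbert $C^*$-module $M$ over $A^{**}$, a unital $*$-homomorphism $\pi:\,B^{**}\to L_{A^{**}}(M)$, and an element $\hat J\in\operatorname{Lin}_{A^{**}}(\tilde V_E,M)$ with
$$
\tilde E(b)(w,w')=\<\hat Jw|\pi(b)\hat Jw'\>,\hspace{5mm}b\in B^{**},\;w,\,w'\in\tilde V_E.
$$
Now set $J:=\hat J\circ T$. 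Since $T$ is $A$-linear and $\hat J$ is $A^{**}$-linear hence $A$-linear, the composite $J$ lies in $\operatorname{Lin}_A(V_E,M)$. Combining the two displayed identities gives, for $b\in B$ and $v,\,v'\in V_E$,
$$
E(b)(v,v')=\tilde E(b)\big(T(v),T(v')\big)=\<\hat JT(v)|\pi(b)\hat JT(v')\>=\<Jv|\pi(b)Jv'\>,
$$
and the adjoint notation $\<v,J^\times\pi(b)Jv'\>$ is just the dual-mapping reformulation of $\<Jv|\pi(b)Jv'\>$ as in Lemma \ref{l1}(i) and Theorem \ref{p1}(i).

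I do not expect a serious obstacle here, since everything is assembled from results already proved; the only points needing a line of care are (a) checking that the point-separation hypothesis of Lemma \ref{3.6} really does hold for $V_E$, which is the one genuinely substantive step and is handled by exhibiting $f(v)=E(\un b)(\un v,v)$ as above, and (b) verifying that $J=\hat J\circ T$ is $A$-linear and that $J^\times$ makes sense as the dual map $M\to (V_E)^\times_A$, $\<v,J^\times m\>:=\<Jv|m\>$ — this is immediate once one notes that an $A^{**}$-valued inner product on $M$ is in particular $A$-sesquilinear. Note that minimality/uniqueness is not claimed in the corollary, so I would not carry the density condition (ii) of Theorem \ref{p1} through; if desired, however, it survives verbatim as $\lin_\C\pi(B^{**})\hat J\tilde V_E$ dense in $M$, though it need not reduce to a statement about $\pi(B)JV_E$.
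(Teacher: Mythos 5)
Your proof is correct and follows exactly the route the paper intends: the paper offers no written proof beyond the remark that Theorems \ref{p1} and \ref{thr} imply the corollary, and your composition $J=\hat J\circ T$ together with the point-separation check for $V_E$ (via $f(v)=E(\un b)(\un v,v)$) is precisely the intended argument. Nothing further is needed.
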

Note that $\lin_\C\,\pi(B)JV_E=\lin_\C \{ \pi (b)Jv\, | \, b\in
B,\,v\in V_E\}$ is not necessarily dense in $M$.

\section{Commutativity and complete positivity}

In this section we let $V$ be a vector space and $A$ a
$C^*$-algebra. Let $(\Omega,\Sigma)$ be a measurable space (where
$\Sigma$ is a $\sigma$-algebra of subsets of $\Omega$) and $\cal F$
the commutative $C^*$-algebra of all bounded $\Sigma$-measurable
complex functions on $\Omega$. Let $\CHI X$ be the characteristic
function of a set $X\in\Sigma$. The techniques in the next two
proofs resemble some arguments used in \cite{Y09} in a different
context.

\begin{lemma}\label{lemma:facp} Let $E:\cal F\to S_\C(V;A)$ be a linear map. The
following conditions are equivalent:

{\rm(i)} $E$ is positive;

{\rm(ii)} $E(\CHI X)\geq0$ for all $X\in\Sigma$, and for every $v\in
V$ the map $f\mapsto E(f)(v,v)$ is continuous;

{\rm(iii)} $E$ is completely positive.
\end{lemma}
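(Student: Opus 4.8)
The plan is to prove the cycle (iii) $\Rightarrow$ (i) $\Rightarrow$ (ii) $\Rightarrow$ (iii); the first implication is immediate from the definition of complete positivity (take $n=1$), so the real content is in (i) $\Rightarrow$ (ii) and, above all, in (ii) $\Rightarrow$ (iii).

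For (i) $\Rightarrow$ (ii): positivity of $E$ gives $E(\CHI X)\geq 0$ at once since $\CHI X=\CHI X^*\CHI X\geq 0$ in $\cal F$. For the continuity claim, fix $v\in V$ and consider the linear functional $E_{v,v}:\cal F\to A$, $f\mapsto E(f)(v,v)$. I would show it is positive: if $f\geq 0$ in $\cal F$ then $f=g^*g$ with $g=\sqrt f\in\cal F$, so $E(f)(v,v)=E(g^*g)(v,v)\geq 0$ because $\big(E(g^*g)(v,v)\big)$ is a positive $1\times 1$ matrix over $A$ (this uses only positivity of $E$ together with the elementary fact that a positive element of a commutative $C^*$-algebra has a positive square root in it). A positive linear map between $C^*$-algebras, one of which is unital, is automatically bounded, hence continuous; $\cal F$ is unital, so $E_{v,v}$ is continuous.

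For (ii) $\Rightarrow$ (iii), the strategy is Stinespring's classical argument that positivity implies complete positivity when the domain is commutative, adapted to the sesquilinear-map-valued setting. Fix $n\in\N$, a positive matrix $(f_{ij})_{i,j=1}^n\in M_n(\cal F)$, and vectors $v_1,\dots,v_n\in V$; the goal is to show $\big(E(f_{ij})(v_i,v_j)\big)_{i,j=1}^n\geq 0$ in $M_n(A)$. By the criterion recalled in Section 3, it suffices to check $\sum_{i,j=1}^n \un a_i^*\,E(f_{ij})(v_i,v_j)\,\un a_j\geq 0$ for all $\un a_1,\dots,\un a_n\in A$. The key point is that a positive matrix $(f_{ij})$ over the commutative algebra $\cal F=\cal F(\Omega)$ is, pointwise in $\omega\in\Omega$, a positive scalar matrix, hence can be approximated (uniformly in $\omega$, using joint measurability and boundedness) by finite sums of matrices of the form $\big(\overline{g_k(\omega)}g_k(\omega')\big)$... more carefully: one writes $(f_{ij})$ as a norm-limit of simple matrix-valued functions, on each atom of which the matrix is a fixed positive scalar matrix and therefore a sum of rank-one positive matrices $\overline{\lambda_i}\lambda_j$. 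Concretely I would partition $\Omega$ into finitely many measurable pieces $X_r$ on which all $f_{ij}$ are nearly constant, and on each $X_r$ diagonalize the approximating constant positive matrix, producing an approximation $f_{ij}\approx\sum_{r,k}\overline{c^{(r,k)}_i}c^{(r,k)}_j\CHI{X_r}$ with scalars $c^{(r,k)}_i$. Then, for such a finite-sum approximant $h_{ij}=\sum_{r,k}\overline{c^{(r,k)}_i}c^{(r,k)}_j\CHI{X_r}$, linearity of $E$ gives
\[
\sum_{i,j=1}^n \un a_i^*\,E(h_{ij})(v_i,v_j)\,\un a_j
=\sum_{r,k}\;\sum_{i,j=1}^n \un a_i^*\,\overline{c^{(r,k)}_i}\,c^{(r,k)}_j\,E(\CHI{X_r})(v_i,v_j)\,\un a_j,
\]
and for each fixed $(r,k)$ the inner double sum equals $E(\CHI{X_r})\!\big(\sum_i c^{(r,k)}_i v_i\cdot\un a_i,\ \sum_j c^{(r,k)}_j v_j\cdot\un a_j\big)$ after bringing the scalars inside and using $A$-sesquilinearity, which is $\geq 0$ since $E(\CHI{X_r})\geq 0$ by (ii). Summing over $r,k$ keeps positivity. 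Finally I would pass to the limit: the second-continuity hypothesis in (ii), combined with the equivalence of the $C^*$-norm on $M_n(A)$ with the max-of-entries norm (Lemma in Section 4.1), shows that $(f_{ij})\mapsto\sum_{i,j}\un a_i^*E(f_{ij})(v_i,v_j)\un a_j$ is continuous on $M_n(\cal F)$ after the further polarization reduction to the diagonal entries, so the positive approximants force the limit $\sum_{i,j}\un a_i^* E(f_{ij})(v_i,v_j)\un a_j\geq 0$.

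The main obstacle is the approximation step: organizing the uniform approximation of an arbitrary positive $M_n(\cal F)$-valued function by finite sums of ``rank-one'' positive matrix functions $\big(\overline{c_i}c_j\CHI{X}\big)$, and then justifying the limit. This requires care with measurability (the diagonalizing scalars $c^{(r,k)}_i$ must be chosen measurably, or one sidesteps this by working on a fine enough finite partition where the approximating matrix is genuinely constant), and with the mode of convergence — one needs convergence of $E(f_{ij})(v,v)$ for $v$ ranging over the finitely many vectors $\sum_i c_i v_i\cdot\un a_i$ that arise, which is exactly what the pointwise-continuity hypothesis in (ii) delivers via the norm equivalence on $M_n(A)$. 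Everything else (the algebraic identities, the reduction via the Section 3 positivity criterion, the trivial implications) is routine.
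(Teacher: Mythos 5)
Your cycle, and the implications (iii)$\Rightarrow$(i) and (i)$\Rightarrow$(ii), match the paper's proof. The trouble is in (ii)$\Rightarrow$(iii), where there is a genuine gap: your verification of the Takesaki criterion rewrites $\sum_{i,j}\un a_i^*\,\overline{c_i}c_j\,E(\CHI{X_r})(v_i,v_j)\,\un a_j$ as $E(\CHI{X_r})\big(\sum_i c_i v_i\cdot\un a_i,\;\sum_j c_j v_j\cdot\un a_j\big)$ ``using $A$-sesquilinearity''. But the standing assumption of Section 5 is that $V$ is only a vector space and $E$ takes values in $S_\C(V;A)$, the merely $\C$-sesquilinear maps; there is no module product $v\cdot\un a$ and no $A$-sesquilinearity to invoke. (The lemma must hold in this generality, since Theorem 5.2 applies it to the restriction of $\hat E:B^{**}\to S_\C(V;A^{**})$, where $V$ carries no $A^{**}$-module structure.) Without that structure the $\un a_i$'s cannot be absorbed into the form, and the hypothesis $E(\CHI{X_r})\geq 0$, which only says $E(\CHI{X_r})(w,w)\geq 0$ in $A$, does not by itself make the matrix $\big(E(\CHI{X_r})(c_iv_i,c_jv_j)\big)_{i,j}$ positive in $M_n(A)$: a diagonally positive $\C$-sesquilinear form need not satisfy this (take $V=A=M_2(\C)$ and $s(v,v')=\theta(v^*v')$ with $\theta$ the transpose; then $s(v,v)\geq0$ always, yet $\big(s(e_{1i},e_{1j})\big)_{i,j}=\big(\theta(e_{ij})\big)_{i,j}$ is the flip matrix, which is not positive). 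So the central positivity step of your argument does not go through in the setting of the lemma.

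Separately, the step you identify as the main obstacle --- approximating an arbitrary positive element of $M_n(\cal F)$ by measurably diagonalized rank-one pieces --- is self-inflicted and is not how the paper proceeds. As recalled at the start of Section 3, every positive element of $M_n(\cal F)$ is a finite sum of matrices of the form $(f_i^*f_j)_{i,j}$, so (as noted after Definition 4.1) complete positivity of $E$ is equivalent to positive-definiteness of the kernel $(f,g)\mapsto E(f^*g)$, and it suffices to treat matrices of that special form. For $f_i=\sum_k c_{ik}\CHI{X_k}$ simple over a common disjoint partition one has the exact identity $f_i^*f_j=\sum_k\overline{c_{ik}}c_{jk}\CHI{X_k}$, and the computation collapses, using only $\C$-sesquilinearity, to $\sum_{i,j}E(f_i^*f_j)(v_i,v_j)=\sum_kE(\CHI{X_k})\big(\sum_ic_{ik}v_i,\sum_jc_{jk}v_j\big)\geq0$; general $\un f_i\in\cal F$ are then handled by norm approximation and the continuity in (ii) together with polarization, exactly as in your limiting step. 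Note that the paper contents itself with this summed inequality $\sum_{i,j}E(\un f_i^*\un f_j)(v_i,v_j)\geq0$ --- which is what the Kolmogorov construction of Section 3 actually consumes --- and never attempts the $\sum_{i,j}\un a_i^*(\,\cdot\,)\un a_j$ test on which your version hinges.
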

\begin{proof} Clearly (i) $\implies$ (ii),
since every positive linear map from a $C^*$-algebra into another is
continuous. Now assume (ii). For $i=1,\dots,n$, choose $v_i\in\C$
and let first $f_i$ be a linear combination of characteristic
functions of sets in $\Sigma$. There are disjoint sets $X_k\in
\Sigma$, $k=1,\dots,p$, such that for some complex numbers $c_{ik}$
we have
$$f_i=\sum_{k=1}^pc_{ik}\CHI{X_k}$$
for all $i=1,\dots,n$. We get
\begin{eqnarray*} \begin{split}
&\sum_{i=1}^n\sum_{j=1}^n E(f_i^*f_j)(v_i,v_j)
=\sum_{i=1}^n\sum_{j=1}^n\sum_{k=1}^pE(\overline{c_{ik}}c_{jk}\CHI{X_k})(v_i,v_j)
\\&\sum_{k=1}^p\sum_{i=1}^n\sum_{j=1}^nE(\CHI{X_k})(c_{ik}v_i,c_{jk}v_j)
=\sum_{k=1}^pE(\CHI{X_k})\Big(\sum_{i=1}^nc_{ik}v_i,\sum_{j=1}^nc_{jk}v_j\Big)\geq0.
\end{split}
\end{eqnarray*}
By polarization, each linear map $f\mapsto E(f)(v,v')$ is a linear
combination of four maps of the form $f\mapsto E(f)(\un v,\un v)$
and hence continuous. Now every element $\un f_i\in\cal F$,
$i=1,\dots,n$, can be approximated in norm by functions $f_i$ of the
type considered above, and by continuity we may conclude that
$$\sum_{i=1}^n\sum_{j=1}^n E(\un f_i^*\un f_j)(v_i,v_j)\geq0.$$
Finally, (iii) trivially implies (i).
\end{proof}

\begin{theorem} If $B$ is a commutative $C^*$-algebra, then every
positive linear map $E:B\to S_\C(V;A)$ is completely positive.
\end{theorem}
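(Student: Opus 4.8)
The plan is to reduce the general commutative $C^*$-algebra $B$ to the special case already treated in Lemma~\ref{lemma:facp}. The key observation is that a commutative $C^*$-algebra $B$ is, by the Gelfand-Naimark theorem, isometrically $*$-isomorphic to $\mathsf{C}_0(\Omega)$ for a locally compact Hausdorff space $\Omega$ (the spectrum of $B$), and $\mathsf{C}_0(\Omega)$ sits inside the $C^*$-algebra $\mathcal F$ of all bounded Baire- (or Borel-) measurable functions on $\Omega$ as a $*$-subalgebra. More precisely, one can take $\Sigma$ to be the $\sigma$-algebra of Baire sets of $\Omega$; then $\mathsf{C}_0(\Omega)\subseteq\mathcal F$, and every $f\in\mathsf{C}_0(\Omega)$ satisfies $f^*f\in\mathsf{C}_0(\Omega)$, so the notion of positivity (finite sums $\sum f_m^*f_m$) is inherited correctly from $B$.

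First I would fix the isomorphism $B\cong\mathsf{C}_0(\Omega)$ and regard $E$ as a positive linear map defined on $\mathsf{C}_0(\Omega)\subseteq\mathcal F$. The next step is to \emph{extend} $E$ to a positive linear map $\widetilde E:\mathcal F\to S_\C(V;A)$. Here the natural device is precisely the second-adjoint (bidual) machinery developed earlier in the nonunital subsection: for each pair $v,v'\in V$ the map $E_{v,v'}:\mathsf{C}_0(\Omega)\to A$ is bounded (positive linear maps between $C^*$-algebras are continuous, via polarization as in Lemma~\ref{lemma:facp}), hence extends to its second adjoint $E_{v,v'}^{**}:\mathsf{C}_0(\Omega)^{**}\to A^{**}$. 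Since $\mathsf{C}_0(\Omega)^{**}$ is a von Neumann algebra containing a copy of $\mathcal F$ (indeed $\mathcal F$, viewed with the appropriate $\sigma$-algebra, embeds as a $C^*$-subalgebra of $\mathsf{C}_0(\Omega)^{**}$, the bidual being the ``universal'' bounded-function algebra in the commutative case), one obtains by restriction a linear map $\widetilde E:\mathcal F\to S_\C(V;A^{**})$ with $\widetilde E(\CHI X)\geq0$ for every $X\in\Sigma$ and with $f\mapsto\widetilde E(f)(v,v)$ continuous on $\mathcal F$.

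Then I would invoke Lemma~\ref{lemma:facp} directly: its hypothesis (ii) is exactly what the previous step establishes for $\widetilde E$ (with $A$ replaced by $A^{**}$, which is harmless since the lemma only uses that the target is a $C^*$-algebra in the guise of $S_\C(V;A^{**})$), so $\widetilde E$ is completely positive. Finally, restricting a completely positive map to a $*$-subalgebra preserves complete positivity — the amplification $\widetilde E^{(n)}$ restricted to $M_n\big(\mathsf{C}_0(\Omega)\big)\subseteq M_n(\mathcal F)$ is still positive — and composing with $B\cong\mathsf{C}_0(\Omega)$ shows $E$ itself is completely positive into $S_\C(V;A^{**})$; but since $E$ already takes values in $A\subseteq A^{**}$, and positivity of a matrix over $A$ is detected the same way whether computed in $M_n(A)$ or $M_n(A^{**})$ (as $A$ is a $C^*$-subalgebra of $A^{**}$), we conclude $E:B\to S_\C(V;A)$ is completely positive.

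The main obstacle I anticipate is making the embedding $\mathcal F\hookrightarrow\mathsf{C}_0(\Omega)^{**}$ and the extension step fully rigorous: one must check that the weak-$*$-continuous extension $E_{v,v'}^{**}$ actually sends characteristic functions $\CHI X$ (for $X$ in the chosen $\sigma$-algebra) to positive elements of $A^{**}$, which hinges on the fact that such $\CHI X$ is a weak-$*$-limit (indeed an increasing net limit, or a limit along a suitable approximation) of positive elements of $\mathsf{C}_0(\Omega)$, together with weak-$*$-continuity of $E_{v,v'}^{**}$ and weak-$*$-closedness of the positive cone of $A^{**}$. Care is also needed in identifying which $\sigma$-algebra $\Sigma$ on $\Omega$ to use so that simultaneously $\mathsf{C}_0(\Omega)$ is dense-enough inside $\mathcal F$ for the ``approximation by simple functions'' argument of Lemma~\ref{lemma:facp} and $\mathcal F$ genuinely embeds into the bidual; the Baire $\sigma$-algebra is the standard safe choice. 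Everything else is routine bookkeeping with the already-proven Lemma~\ref{lemma:facp} and the bidual constructions from Section~4.
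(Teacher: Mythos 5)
Your proposal is correct and follows essentially the same route as the paper: identify $B$ with $C_0(\Omega)$, use the bidual $B^{**}$ (via the Riesz--Markov--Kakutani embedding of the bounded Borel functions into $B^{**}$) to extend the positive map to the algebra of bounded measurable functions, apply Lemma \ref{lemma:facp}, and restrict back. The only cosmetic difference is that the paper invokes the implication (i) $\Rightarrow$ (iii) of that lemma (positivity of the restriction to the measurable-function algebra suffices), whereas you verify hypothesis (ii) directly, which is why you end up worrying about $\widetilde E(\CHI{X})\geq 0$ --- a check the paper's formulation sidesteps.
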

\begin{proof} We may assume that $B=C_0(X)$ for some locally
compact Hausdorff space $X$. Let $\cal B$ denote the Borel
$\sigma$-algebra of $X$ and $\cal F_X$ the commutative $C^*$-algebra
of bounded $\cal B$-measurable functions on $X$. The
Riesz-Markov-Kakutani representation theorem yields a canonical
embedding of $\cal F_X$ into $B^{**}$. The restriction of $\tilde
E:B^{**}\to S_\C(V;{A^{**}})$ to $\cal F_X$ is a positive linear
map, and so it is completely positive by Lemma \ref{lemma:facp}. Its
restriction to $C_0(X)$ is just $E$ (when we consider
$S_\C(V;A)\subseteq S_\C(V;{A^{**}})$), and so $E$ is completely
positive.
\end{proof}

\subsection{The commutative case: an example}

In this subsection, we generalize the concept of a sesquilinear form
measure \cite{HPY1,HPY2} using certain modules over von Neumann
algebras. Sesquilinear form measures are used in quantum mechanics
to describe measurements where all states cannot be prepared
\cite{Pe}. 
The use of von Neumann algebras has its roots in physical
applications. In the same vein,  we assume that the modules are
countably generated to get a natural generalization of a separable
Hilbert space.

Let $A$ be a von Neumann algebra, (represented as) a weakly closed
*-subalgebra of $L(K)$, the space of bounded operators
on a Hilbert space $K$, such that the identity operator $I_K$ is in
$A$. We assume that $K$ is separable and denote its inner product by
$\<\,\cdot\,|\,\cdot\,\>_K$.

A Hilbert $C^*$-module $M$ over $A$ is countably generated if there
exists a countable set $Z\subseteq M$ such that the smallest closed
submodule which contains $Z$ is $M$. By Kasparov's stabilisation
theorem \cite[Th.\ 6.2, Cor.\ 6.3]{La95}, any countably generated
Hilbert $C^*$-module over $A$ can be seen as a (fully complemented)
$A$-submodule of $H\otimes A$ where $H$ is a separable
infinite-dimensional Hilbert space. Hence, it is not very
restrictive to consider $A$-submodules of $H\otimes A$.

Let $(\Omega,\Sigma)$ be a measurable space and  $\un V$ a module
over $A$.

\begin{definition}
Let $E:\,\Sigma\to S_A(\un V)$ be a map. Denote $E(X)=E_X$ for all
$X\in\Sigma$. The map $E$ is an {\em $A$-sesquilinear map (valued)
measure} if, for all $v,\,v'\in \un V$ the map
$$\Sigma\ni X\mapsto E_X(v,v')\in A\subseteq L(K)$$ is an operator
measure (i.e.\ $\sigma$-additive with respect to the weak operator
topology).
\end{definition}

\begin{remark}
The representation  $A\subseteq L(K)$ is here taken to be part of the
basic setting, and the Orlicz-Pettis theorem \cite{DS} shows that in
this definition the weak operator topology could be replaced by the
strong operator topology. Since any operator measure is norm bounded,
the weak operator topology could also be replaced with the $\sigma$-weak topology which has an intrinsic meaning for $A$
independently of the representation in $L(K)$.
\end{remark}

Let $H$ be a separable Hilbert space with the inner product
$\<\,\cdot\,|\,\cdot\,\>_H$ and an orthonormal basis
$\{e_n\}_{n\in\NN}$ where $\NN\subseteq\N$. Now
$V:=\lin_{\C}\{e_n\otimes a\ |\ n\in\NN,\,a\in A\}$ is an inner
product $A$-module where the module product is defined by $(h\otimes
a)\cdot a':=h\otimes (aa')$ and the inner product is $\<h\otimes
a|h'\otimes a'\>:=\<h|h'\>_Ha^*a'. $ Let $H\otimes A$ denote the
completion of $V$ to a Hilbert $C^*$-module. If $H$ is finite dimensional
then $V=H\otimes A$ is isomorphic to $A^{\dim H}$.


Let $E:\,\Sigma\to S_A(V)$ be an $A$-sesquilinear map measure. For
all $m,\,n\in\NN$  we define an operator measure
$$
\Sigma\ni X\mapsto E_{mn}(X):=E_X(e_m\otimes I_K,e_n\otimes I_K)\in
A \subseteq L(K),
$$
so that, for all $X\in\Sigma$, $m,\,n\in\NN$ and $a,\,a'\in A$,
$$
E_X(e_m\otimes a,e_n\otimes a')=a^*E_{mn}(X)a'.
$$
Thus, the structure of $E$ is determined by the operator measures
$E_{mn}$, $m,\,n\in\NN$.

Since $A$ is a subalgebra of $L(K)$ and $K$ is separable, it follows
that, for all $m,\,n\in\NN$, the operator measure $E_{mn}$ is
absolutely continuous with respect to the scalar measure
$\mu_{mn}:\,\Sigma\to\C$ \cite{HPY2}. Let $(p_{mn})_{m,n\in\NN}$ be
a double sequence of positive numbers such that $\sum_{m,n\in\NN}
p_{mn}|\mu_{mn}|(\Omega)$ converges absolutely (here $|\mu|$ means
the total variation of a complex measure $\mu$) so that
$$
X\mapsto\bm\mu(X):=\sum_{m,n\in\NN} p_{mn}|\mu_{mn}|(X)
$$
is a finite positive measure on $(\Omega,\Sigma)$ such that any
$\mu_{mn}$ is absolutely continuous with respect to $\bm\mu$
\cite{HPY2}.

Suppose then that there exists an orthonormal basis
$\{f_k\}_{k=1}^{\dim K}$ of $K$ such that as a vector subspace
$W:=\lin_\C\{f_k\,|\,k=1,2,\ldots,\dim K\}$ is invariant with
respect to $A$ (viewed as an operator algebra), that is, $aw\in W$
for all $a\in A$ and $w\in W$. Another way of saying this is that
the matrix of any $a\in A$ is column (and thus row) finite with
respect to the basis $\{f_k\}_{k=1}^{\dim K}$. This holds, for
example, when $\dim K<\infty$ or $A$ is a finitely generated algebra,
that is, there exists a finite set $G\subseteq A$, such that any
$a\in A$ is a linear combination of the elements $g_1g_2\cdots g_n$,
$n\in\N$, where $g_i\in G$ for all $i=1,\ldots,n$ \cite{EG}.

Let then $C_{mn}:\,\Omega\to S_\C(W;\C)$ be a density of $E_{mn}$
with respect to $\bm\mu$ \cite{HPY2}. We can write
$$
[C_{mn}(x)](w,w')=\sum_{k,l=1}^{\dim
K}[C_{mn}(x)](f_k,f_l)\overline{c_k}c'_l
$$
where $x\in\Omega$, $w=\sum_{k}c_k f_k$, $w'=\sum_{l}c'_l f_l\in W$
and the sums are finite. Now
$$
{ \<w|E_X(e_m\otimes a,e_n\otimes a')w'\>_K=\<a
w|E_{mn}(X)a'w'\>_K=\int_X[C_{mn}(x)](aw,a'w')d\bm\mu(x)}
$$
so that we have obtained a density for $E$, in the spirit of \cite{HPY1}.

\begin{remark}
Let $E:\,\Sigma\to S_A(V)$ be an $A$-sesquilinear map measure.
Suppose that $E(X)\ge 0$ for all $X\in\Sigma$. By integration (in
the sense of the weak operator topology) we get for all $v,\,v'\in
V$ and any $f\in\mathcal F$ an operator $E(f)(v,v') = \int_\Omega
f\,dE(\cdot)(v,v')$. For $v\in V$ the map $f\mapsto E(f)(v,v)$ is a
positive linear map, hence norm continuous, if $X\mapsto E(X)(v,v)$
is  a positive operator measure. It thus follows from Lemma 5.1 that
if $E(X)\ge 0$ for all $X\in\Sigma$ then the associated mapping
$E:\,\mathcal F\to S_A(V)$ is completely positive by Lemma
\ref{lemma:facp} and, by Theorem \ref{p1}, it has a minimal
dilation. This is a generalization of Theorem 3.6 of \cite{HPY1}.
\end{remark}



\end{document}